\renewcommand\theequation{\thesection.\arabic{equation}}
\newcommand{\BA}{{\mathbb {A}}}
\newcommand{\BC}{{\mathbb {C}}}
\newcommand{\BQ}{{\mathbb {Q}}}
\renewcommand{\CD}{{\mathcal {D}}}
\newcommand{\CF}{{\mathcal {F}}}
\newcommand{\CO}{{\mathcal {O}}}
\newcommand{\RG}{{\mathrm {G}}}
\newcommand{\GL}{{\mathrm{GL}}}
\newcommand{\Hom}{{\mathrm{Hom}}}
\newcommand{\Ind}{{\mathrm{Ind}}}
\newcommand{\ind}{{\mathrm{ind}}}
\renewcommand{\Re}{{\mathrm{Re}}}
\newcommand{\bs}{\backslash}
\def\diag{{\rm diag}}
\newtheorem{thm}{Theorem}[section]
\newtheorem{cor}[thm]{Corollary}
\newtheorem{prop}[thm]{Proposition}
\newtheorem {assu}[thm]{Assumption}
\newtheorem {ques/conj}[thm]{Question/Conjecture}
\newtheorem{rmk}[thm]{Remark}
\newtheorem{exmp}[thm]{Example}
\newcommand{\Rmnum}[1]{\expandafter\@slowromancap\romannumeral #1@}
\begin{document}
\renewcommand{\theequation}{\arabic{equation}}
\numberwithin{equation}{section}

\title[Top Fourier coefficients of automorphic forms]{On top Fourier coefficients of certain automorphic representations of $\GL_n$}

\author{Baiying Liu}
\address{Department of Mathematics\\
Purdue University\\
150 N. University St\\
West Lafayette, IN 47907, USA}
\email{liu2053@purdue.edu}

\author{Bin Xu}
\address{School of Mathematics\\
Sichuan University\\
No. 29 Wangjiang Road\\ 
Chengdu, 610064, People's Republic of China}
\email{binxu@scu.edu.cn}

\subjclass[2000]{Primary 11F30, 22E55; Secondary 22E50, 11F70}


\keywords{Fourier Coefficient, Nilpotent Orbit, Discrete Spectrum, and Isobaric Automorphic Representation}

\thanks{The research of the first named author is partially supported by NSF grant DMS-1702218 and by start-up funds from the Department of Mathematics at Purdue University. The second named author is partially supported by NSFC grant No.11501382 and by the Fundamental Research Funds for the Central Universities}

\begin{abstract}
In this paper, we study top Fourier coefficients of certain automorphic representations of $\GL_n(\BA)$. 
In particular, we prove a conjecture of Jiang on top Fourier coefficients of isobaric automorphic representations of $\GL_n(\BA)$ of form
$$
\Delta(\tau_1, b_1) \boxplus \Delta(\tau_2, b_2) \boxplus \cdots \boxplus \Delta(\tau_r, b_r)\,,
$$
where $\Delta(\tau_i,b_i)$'s are Speh representations 
in the discrete spectrum of $\GL_{a_ib_i}(\BA)$ with $\tau_i$'s being unitary cuspidal representations of $\GL_{a_i}(\BA)$, and $n = \sum_{i=1}^r a_ib_i$.
Endoscopic lifting images of discrete spectrum of classical groups form a special class of such representations. The result of this paper will facilitate the study of automorphic forms of classical groups occurring in the discrete spectrum. 
\end{abstract}

\maketitle



\section{Introduction}\label{section: intro}

Fourier coefficients are important in the study automorphic forms. For example, Whittaker-Fourier coefficients play an essential role in the theory of constructing automorphic $L$-functions, either by Rankin-Selberg method or by Langlands-Shahidi method. In general, there is a framework of attaching Fourier coefficients to nilpotent orbits (see \cite{GRS03, G06, J14, GGS17a}, and also \S \ref{def of FC's} for details), which has also been used in theory of automorphic descent (see \cite{GRS11}). 
Let $F$ be a number field and $\BA$ be its ring of adeles. Let $\mathrm{G}$ be a connected reductive group defined over $F$. 
One important topic in the theory of Fourier coefficients is to study all nilpotent orbits providing nonzero Fourier coefficients for a given automorphic representation $\pi$ of $\RG(\BA)$. We denote the set of all such nilpotent orbits by $\mathfrak{n}(\pi)$. 
The subset of maximal nilpotent orbits ${\mathfrak{n}}^m(\pi)$ in $\mathfrak{n}(\pi)$ under the natural ordering of partitions is particularly interesting.
For classical groups, nilpotent orbits are parameterized by partitions of certain integers (see \cite{CM93, W01}), and in such cases, a relatively easier question is to characterize the sets of partitions $\mathfrak{p}(\pi)$ and $\mathfrak{p}^m(\pi)$ parameterizing nilpotent orbits in 
$\mathfrak{n}(\pi)$ and $\mathfrak{n}^m(\pi)$, respectively. A folklore conjecture is that all nilpotent orbits in $\mathfrak{n}^m(\pi)$ belong to the same geometric orbit (namely over the algebraic closure $\overline{F}$), this means that the set $\mathfrak{p}^m(\pi)$ is a singleton in the cases of classical groups. 
The properties of $\mathfrak{n}(\pi)$, $\mathfrak{n}^m(\pi)$, $\mathfrak{p}(\pi)$, and $\mathfrak{p}^m(\pi)$ have been studied extensively in many papers, for example, 
\cite{GRS03, G06, J14, JL13, JL15, JL16a, JL16b, JL17, JLS16, C16, Ts17, GGS17a, GGS17b}. 

In the case of $\GL_n$, the nilpotent orbits are in one-to-one correspondence with partitions of $n$ (see \cite{CM93}). 
In the 1970s, Shalika \cite{S74} and Piatetski-Shapiro \cite{PS79} proved independently that any irreducible cuspidal automorphic representation $\pi$ has a non-zero Whittaker-Fourier coefficient, i.e. $\mathfrak{p}^m(\pi)=\{[n]\}$, corresponding to the largest nilpotent orbit.
By the work of M\oe glin and Waldspurger \cite{MW89}, the discrete spectrum of $\GL_{n}(\BA)$ consists of Speh representations $\Delta(\tau,b)$ (see \S \ref{subsection: Speh} for details), where $\tau$ runs over irreducible unitary cuspidal automorphic representations of $\GL_a(\BA)$, and $n=ab$. Ginzburg proved in \cite{G06} that $\mathfrak{p}^m(\Delta(\tau,b))=\{[a^b]\}$ with a local-global argument, and Jiang and the first-named author proved the same result in \cite{JL13} using purely global methods. 
Let $n=\sum_{i=1}^r b_i$ and consider the representation
$$\pi=\Ind_{P_{b_1,\cdots, b_r}(\BA)}^{\GL_n(\BA)} \delta_{P_{b_1,\cdots, b_r}}^{\underline{s}}$$
with $\underline{s}=(s_1,\cdots, s_r)\in \BC^r$ and $\mathrm{Re}(s_i-s_{i+1}) \gg 0$, which can be
realized as a space of degenerated Eisenstein series. Then it was conjectured by Ginzburg (see  \cite[Conjecture 5.1]{G06}) and proved recently by Cai in \cite{C16} that $$\mathfrak{p}^m(\pi)=\{[b_1 b_2\cdots b_r]^t\}
=\{[1^{b_1}] + [1^{b_2}] + \cdots + [1^{b_r}]\}\,.$$
Recall that for any partition $[q_1 q_2\cdots q_n]$ of $n$, one defines its transpose $[q_1 q_2\cdots q_n]^t$ to be $[q_1^t\cdots q_n^t]$, where $q_i^t=\sharp\{j\ |\ q_j\geq i\}$  (see \cite[\S 6.3]{CM93}).
The definition of the sum of partitions is referred to \cite[Lemma 7.2.5]{CM93}

The purpose of this paper is to generalize the results above and study the top Fourier coefficients of automorphic representations of $\GL_n(\BA)$ which are induced from Speh representations
$$\Pi_{\underline{s}} = \Ind^{\GL_n(\BA)}_{P(\BA)}
\Delta(\tau_1,b_1) \lvert \cdot \rvert^{s_1} 
\otimes \cdots \otimes \Delta(\tau_r,b_r) \lvert \cdot \rvert^{s_r},$$
where $P=MN$ is a parabolic subgroup of $\GL_n$ with Levi subgroup $M$ isomorphic to $\GL_{a_1b_1} \times \cdots \times \GL_{a_rb_r}$, $\tau_i$ is an irreducible unitary cuspidal automorphic representation of $\GL_{a_i}(\BA)$,
$n=\sum_{i=1}^r a_ib_i$, and $\underline{s}=(s_1,\ldots, s_r)\in \BC^r$.  We make the following technical assumption which will be used in the proof of Proposition \ref{nonvanishing}:

\begin{assu}\label{assumption}
Assume that $\underline{s}=(s_1,\ldots, s_r)\in \BC^r$ satisfies
\begin{enumerate}
\item $\mathrm{Re}(s_i-s_j) \in (-\infty, -1] \cup [0,1) \cup (1, \infty)$, if $b_i$ and $b_j$ have the same parity;
\item $\mathrm{Re}(s_i-s_j) \in (-\infty, -\frac{3}{2}] \cup [-\frac{1}{2},\frac{1}{2}) \cup (\frac{1}{2}, \infty)$, if $b_i$ is odd and $b_j$ is even. 
\end{enumerate}
\end{assu} 

The following is our main result. 
\begin{thm}\label{main thm}
Assume that $\underline{s}$ satisfies Assumption \ref{assumption}. Then for the automorphic representation $\Pi_{\underline{s}}$ as above, we have
$$
\mathfrak{p}^m(\Pi_{\underline{s}})
= \{[b_1^{a_1} \cdots b_r^{a_r}]^t\}
= \{[a_1^{b_1}]+\cdots +[a_r^{b_r}]\} \,.
$$
\end{thm}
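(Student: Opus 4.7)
The plan is to split the theorem into two complementary halves: an upper bound $\mathfrak{p}^m(\Pi_{\underline{s}}) \subseteq \{[a_1^{b_1}]+\cdots+[a_r^{b_r}]\}$, and a matching non-vanishing statement at precisely that partition, the latter being the promised Proposition \ref{nonvanishing} (which is exactly what Assumption \ref{assumption} is tailored for). The two expressions $[b_1^{a_1}\cdots b_r^{a_r}]^t$ and $[a_1^{b_1}]+\cdots+[a_r^{b_r}]$ coincide by a direct combinatorial check on transposes versus sums of partitions, so one may work throughout with the ``sum'' form, which is the one best adapted to the block structure of the inducing data.

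For the upper bound, I would combine the established fact $\mathfrak{p}^m(\Delta(\tau_i,b_i))=\{[a_i^{b_i}]\}$ for each Speh factor (Ginzburg \cite{G06} and Jiang--Liu \cite{JL13}) with the now-standard heredity principle for top Fourier coefficients under parabolic induction developed in \cite{GRS03, JL15, JL16a}. Concretely, a nonzero Fourier coefficient of $\Pi_{\underline{s}}$ attached to a partition $\underline{p}$ strictly larger (in the dominance order) than $\sum_i [a_i^{b_i}]$ could, after unfolding against the parabolic $P$ and exploiting the block decomposition of the Levi, be manipulated to yield a nonzero Fourier coefficient on some $\Delta(\tau_i,b_i)$ attached to a partition strictly larger than $[a_i^{b_i}]$, contradicting the Speh case. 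The one genuinely combinatorial check is that dominance on partitions of $n$ decomposes correctly into dominance on the partitions of each $a_ib_i$ supported on the corresponding Levi factor; this is routine bookkeeping.

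The harder half is Proposition \ref{nonvanishing}, the non-vanishing at $\sum_i [a_i^{b_i}]$. My strategy would be to pick an explicit section of $\Pi_{\underline{s}}$ and unfold its Fourier integral against the unipotent $V_{\sum_i [a_i^{b_i}]}$ via representatives of $P \bs \GL_n / V_{\sum_i [a_i^{b_i}]}$ chosen to align the block structure of the Levi with that of the partition. The ``diagonal'' orbit should produce the product of the known nonzero Fourier coefficients of the $\Delta(\tau_i,b_i)$ against $V_{[a_i^{b_i}]}$, while each non-diagonal orbit must be shown to vanish, with the cuspidality of each $\tau_i$ being the main tool used to kill the cross-block integrals. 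The main obstacle, and precisely the reason for Assumption \ref{assumption}, is controlling the standard intertwining operators between $\Delta(\tau_i,b_i)\lvert\cdot\rvert^{s_i}$ and $\Delta(\tau_j,b_j)\lvert\cdot\rvert^{s_j}$ as the orbit representatives mix the blocks: the forbidden intervals for $\Re(s_i-s_j)$ listed in the assumption are exactly those values at which these operators can develop poles or $\Pi_{\underline{s}}$ can become reducible in a way that destroys the diagonal isolation, and the half-integer shifts in the opposite-parity case $(b_i,b_j)=(\text{odd},\text{even})$ arise from the natural $\tfrac{1}{2}$-shift in the centers of the unipotents $V_{[a_i^{b_i}]}$ and $V_{[a_j^{b_j}]}$ when their parities differ. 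Once the non-diagonal contributions are confirmed to vanish and the diagonal contribution is exhibited as a nonzero product of functionals, combining this with the upper bound yields the theorem.
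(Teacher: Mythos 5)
Your overall split (non-vanishing at $\mu=[a_1^{b_1}]+\cdots+[a_r^{b_r}]$ plus an exclusion of all larger partitions) matches the shape of the paper's proof, but both halves of your outline rest on steps that are not available and, as stated, would fail. For the upper bound, the ``heredity principle for top Fourier coefficients under parabolic induction'' you want to cite does not exist as a black box, and the reduction you sketch is not correct: a partition of $n$ strictly dominating $\mu$ (or incomparable to it --- a case you never address, although it must be excluded to get $\mathfrak{p}^m(\Pi_{\underline{s}})=\{\mu\}$) does not decompose into partitions of the blocks $a_ib_i$, and unfolding the corresponding Fourier coefficient of the induced representation over $P\backslash \GL_n/N$ does not produce Fourier coefficients of the individual $\Delta(\tau_i,b_i)$; the non-trivial cosets mix the blocks and yield degenerate integrals that cannot be bounded by the Speh case alone. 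This is precisely the kind of global unfolding the paper flags as delicate (see Remark \ref{rmk0} on the gap in Cai's global criterion). The paper avoids it entirely by going local: at an unramified place each constituent of $\Pi_{\underline{s}}$ is the unramified constituent of an induction from characters, the M\oe glin--Waldspurger/Varma criterion (Proposition \ref{Degenerate Whittaker and Semi-Whittaker}) reduces the exclusion to the vanishing of degenerate Whittaker models attached to the pairs $(s_n,u_\lambda)$, and that vanishing is proved via Bernstein's localization principle together with Cai's root-theoretic lemma (Proposition \ref{theorem for vanishing}); the key point making this work is that the local top orbit at such places agrees with the expected global one.

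For the non-vanishing half, your orbit-by-orbit unfolding with a ``diagonal'' term equal to the product of the Fourier coefficients of the $\Delta(\tau_i,b_i)$ attached to $[a_i^{b_i}]$ faces a structural obstacle: the rows of $\mu$ are sums $t_j$ of several $a_i$'s, so the unipotent attached to $\mu$ does not respect the Levi $\GL_{a_1b_1}\times\cdots\times\GL_{a_rb_r}$, there is no coset whose contribution factors as you describe, and cuspidality of the $\tau_i$ does not kill the cross terms since the Speh representations themselves are not cuspidal. The paper's Proposition \ref{nonvanishing} proceeds differently: it regroups the cuspidal exponents of the $\pi_{\tau_i,b_i}$ into ``columns'' so that $\Pi_{\underline{s}}$ is a subquotient of $\Ind_{Q(\BA)}^{\GL_n(\BA)}\eta_1\otimes\cdots\otimes\eta_{b_1}$ with each $\eta_j$ irreducible and generic; Assumption \ref{assumption} enters exactly at this point, through the pole and non-vanishing behavior of the Rankin--Selberg $L$-functions guaranteeing irreducibility and genericity of the $\eta_j$, not through controlling intertwining operators in an unfolding as you propose. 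The nonzero degenerate coefficient $\CF_{s_n,u_\mu}$ is then simply the constant term along $Q$ followed by a Whittaker coefficient on the Levi, and \cite[Theorem C]{GGS17a} (Proposition \ref{ggsglobal1}) upgrades it to the generalized Whittaker--Fourier coefficient attached to the neutral pair for $\mu$. Without the column regrouping plus the GGS comparison on one side, and the local MW--Varma/Bernstein/Cai argument on the other, both halves of your proposal remain unproved.
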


Theorem \ref{main thm} proves a conjecture of Jiang (\cite[Conjecture 4.1]{J14}, see Corollary \ref{Jiang conjecture}) on top Fourier coefficients of isobaric automorphic representations of general linear groups of form:
\begin{align*}
\Pi\ &= \ \Delta(\tau_1, b_1) \boxplus \Delta(\tau_2, b_2) \boxplus \cdots \boxplus \Delta(\tau_r, b_r)\\
&\cong  \ \Ind^{\GL_n(\BA)}_{P(\BA)}
\Delta(\tau_1,b_1)
\otimes \cdots \otimes \Delta(\tau_r,b_r)\,.
\end{align*}
In this case, we have $\underline{s}=(0,\ldots, 0)$, and Assumption \ref{assumption} holds automatically.
Note that from the Arthur classification of the discrete spectrum of classical groups (see \cite{A13, M15, KMSW14, Xu14}), endoscopic lifting images of automorphic representations of classical groups occurring in the discrete spectrum form a special class of such isobaric automorphic representations. 
We also note that a related conjecture on top Fourier coefficients of Eisenstein series and their residues on $\GL_n(\BA)$ is stated in \cite[Conjecture 5.6]{G06}. 

Our proof makes use of some recent results on Fourier coefficients of automorphic forms which pack up some systematical arguments in this topic, and hence can be done in a shorter length. 
We use a result of Gomez-Gourevitch-Sahi in \cite{GGS17a} to show that $\Pi_{\underline{s}}$ has a nonzero {\it generalized Whittaker-Fourier coefficient} attached to the partition 
$[a_1^{b_1}]+\cdots +[a_r^{b_r}]$ in \S \ref{section:nonvanishing} (see Proposition \ref{nonvanishing}, and see \S \ref{def of FC's} for the definition of such Fourier coefficients). 
On the other hand, to show $[a_1^{b_1}]+\cdots +[a_r^{b_r}]$ is exactly the top orbit for $\Pi_{\underline{s}}$, one also needs to show that $\Pi_{\underline{s}}$ has no non-zero generalized Whittaker-Fourier coefficients attached to any partition bigger than or not related to $[a_1^{b_1}]+\cdots +[a_r^{b_r}]$. For this, we use a local criterion (see Proposition \ref{Degenerate Whittaker and Semi-Whittaker}) which is due to the works of M\oe glin-Waldspurger (\cite{MW87}) and Varma (\cite{V14})). 
This local criterion reduces the proof of vanishing Fourier coefficients to a simpler local vanishing statement, which is proved in \S \ref{section: vanishing} 
(see Proposition \ref{theorem for vanishing}) 
using Bernstein's localization principle (see \cite{BZ76}) and a combinatorial result of Cai (\cite{C16}).
We note that one important feature of (the constituents of) the representation $\Pi_{\underline{s}}$ we are considering is that its global top orbit equals to its local top orbit at almost all places, so that this approach works.

We remark that when we were finishing up this paper, we noticed that the same result for isobaric automorphic representations $\Pi$ as above is proved at the same time by Tsiokos in \cite{Ts17}, independently, using a different method. 

Finally, it is worthwhile to mention that, towards understanding Fourier coefficients of automorphic representations in the discrete spectrum of classical groups, in \cite[\S 4.4]{J14}, Jiang made a conjecture on the connection between Fourier coefficients of automorphic representations in an Arthur packet and the structure of the corresponding Arthur parameter
(see \cite{JL16a} for the progress on the cases of symplectic groups). 
The result of this paper will facilitate the study of Fourier coefficients of automorphic representations in the discrete spectrum of classical groups, since the endoscopic lifting image of each Arthur packet is an isobaric automorphic representation of a general linear group.

\subsection*{Acknowledgements}
We would like to thank Professor Dihua Jiang for his interest in this work and for the valuable suggestions and constant encouragement. We also thank Yuanqing Cai for helpful communication on the result in his paper \cite{C16}.

\section{Generalized and degenerate Whittaker-Fourier coefficients attached to nilpotent orbits}\label{def of FC's}

In this section, we recall the generalized and degenerate Whittaker-Fourier coefficients attached to nilpotent orbits, and also some related basic definitions mentioned in \S \ref{section: intro}, following the formulation in \cite{GGS17a}. 
Then we introduce a local criterion due to \cite{MW87, V14} on determining the top generalized Whittaker models in the case of $\GL_n$.

\subsection{The generalized and degenerate Whittaker-Fourier coefficients}\label{subsection: Gen and Deg FC}

Let $\RG$ be a reductive group defined over a number field $F$.
Fix a nontrivial additive character $\psi: F\bs \BA \rightarrow \BC^{\times}$.
Let $\mathfrak{g}$ be the Lie algebra of $\RG(F)$ and $u$ be a nilpotent element in $\mathfrak{g}$.
The element $u$ defines a function on $\mathfrak{g}$:
\[
\psi_u: \mathfrak{g} \rightarrow \BC^{\times}
\]
by $\psi_u(x) = \psi(\kappa(u,x))$, where $\kappa$ is the Killing form on $\mathfrak{g}$.

Given any semi-simple element $s \in \mathfrak{g}$, under the adjoint action, $\mathfrak{g}$ is decomposed into a direct sum of eigen-spaces $\mathfrak{g}^s_i$ corresponding to eigenvalues $i$.
The element
$s$ is called {\it rational semi-simple} if all its eigenvalues are in $\BQ$.
Given a nilpotent element $u$ and a simi-simple element $s$ in $\mathfrak{g}$, the pair $(s,u)$ is called a {\it Whittaker pair} if $s$ is a rational semi-simple element, and $u \in \mathfrak{g}^s_{-2}$. The element $s$ in a Whittaker pair $(s, u)$ is called a {\it neutral element} for $u$ if there is a nilpotent element $v \in \mathfrak{g}$ such that $(v,s,u)$ is an $\mathfrak{sl}_2$-triple. A Whittaker pair $(s, u)$ with $s$ being a neutral element is called a {\it neutral pair}. 

Given any Whittaker pair $(s,u)$, define an anti-symmetric form $\omega_u$ on $\mathfrak{g}\times \mathfrak{g}$ by 
$$\omega_u(X,Y):=\kappa(u,[X,Y])\,.$$
For any rational number $r \in \BQ$, let $\mathfrak{g}^s_{\geq r} = \oplus_{r' \geq r} \mathfrak{g}^s_{r'}$. 
 Let $\mathfrak{u}_s= \mathfrak{g}^s_{\geq 1}$ and let $\mathfrak{n}_{s,u}$ be the radical of $\omega_u |_{\mathfrak{u}_s}$. Then $[\mathfrak{u}_s, \mathfrak{u}_s] \subset \mathfrak{g}^s_{\geq 2} \subset \mathfrak{n}_{s,u}$. 
For any $X \in \mathfrak{g}$, let $\mathfrak{g}_X$ be the centralizer of $X$ in $\mathfrak{g}$.  
By \cite[Lemma 3.2.6]{GGS17a}, one has $\mathfrak{n}_{s,u} = \mathfrak{g}^s_{\geq 2} + \mathfrak{g}^s_1 \cap \mathfrak{g}_u$.
Note that if the Whittaker pair $(s,u)$ comes from an $\mathfrak{sl}_2$-triple $(v,s,u)$, then $\mathfrak{n}_{s,u}=\mathfrak{g}^s_{\geq 2}$. Let 
$N_{s,u}=\exp(\mathfrak{n}_{s,u})$ be the corresponding unipotent subgroups of $\RG$, we define a character of $N_{s,u}$ by $$\psi_u(n)=\psi(\kappa(u,\log(n))).$$ 

Let $\pi$ be an irreducible automorphic representation of $\RG(\BA)$. For any $\phi \in \pi$, the {\it degenerate Whittaker-Fourier coefficient} of $\phi$ attached to 
a Whittaker pair $(s,u)$ is defined to be
\begin{equation}\label{dwfc}
\CF_{s,u}(\phi)(g):=\int_{[N_{s,u}]} \phi(ng) \psi_u^{-1}(n) \, \mathrm{d}n\,.
\end{equation}
If $(s,u)$ is a neutral pair, then $\CF_{s,u}(\phi)$ is also called a {\it generalized Whittaker-Fourier coefficient} of $\phi$.
Let 
$$\CF_{s,u}(\pi)=\{\CF_{s,u}(\phi)\ |\ \phi \in \pi\}\,.$$
The {\it wave-front set} $\mathfrak{n}(\pi)$ of $\pi$ is defined to be the set of nilpotent orbits $\CO$ such that $\CF_{s,u}(\pi)$ is nonzero for some neutral pair $(s,u)$ with $u \in \CO$. Note that if $\CF_{s,u}(\pi)$ is nonzero for some neutral pair $(s,u)$ with $u \in \CO$, then it is nonzero for any such neutral pair $(s,u)$, since the non-vanishing property of such Fourier coefficients does not depend on the choices of representatives of $\CO$.
Moreover, we let $\mathfrak{n}^m(\pi)$ be the set of maximal elements in $\mathfrak{n}(\pi)$ under the natural ordering of nilpotent orbits (i.e., the dominance ordering).

We recall \cite[Theorem C]{GGS17a} as follows.

\begin{prop}[Theorem C, \cite{GGS17a}]\label{ggsglobal1}
Let $\pi$ be an automorphic representation of $\RG(\BA)$.
Given a neutral pair $(s,u)$ and a Whittaker pair $(s',u)$, if $\CF_{s',u}(\pi)$ is nonzero, then $\CF_{s,u}(\pi)$ is nonzero.
\end{prop}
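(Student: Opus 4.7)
The plan is to connect the two Whittaker pairs by a finite sequence of intermediate Whittaker pairs for $u$ and to transfer nonvanishing across each step using Fourier expansion along carefully chosen abelian unipotent subgroups. Since $[s,u] = [s',u] = -2u$, the difference $s - s'$ lies in the centralizer $\mathfrak{g}_u$. Using the Jacobson-Morozov $\mathfrak{sl}_2$-triple $(v,s,u)$ together with the structure of $\mathfrak{g}_u$, the first step is to conjugate $(s',u)$ by an element $\exp(X) \in \RG(F)$ with $X \in \mathfrak{g}_u(F)$ to reduce to the case $[s,s']=0$; such a conjugation preserves $u$ (hence the character $\psi_u$) and sends $\CF_{s',u}(\pi)$ to $\CF_{\Ad(\exp X)s',\,u}(\pi)$, so nonvanishing is preserved.

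Assuming now $[s,s']=0$, I would consider the one-parameter family $s_t := (1-t)s + ts'$ for $t \in [0,1]\cap\BQ$. Each $s_t$ is rational semisimple and satisfies $[s_t, u] = -2u$, so $(s_t, u)$ is a Whittaker pair. As $t$ varies, the subalgebra $\mathfrak{n}_{s_t,u}$ and the unipotent group $N_{s_t, u}$ change only at finitely many rational critical values $0 = t_0 < t_1 < \cdots < t_N = 1$, corresponding to root spaces (in the simultaneous eigendecomposition of $\mathrm{ad}(s)$ and $\mathrm{ad}(s')$) whose $s_t$-weight crosses the threshold $1$ or $2$. It therefore suffices to prove, for each $i$, that nonvanishing of $\CF_{s_{t_i},u}(\pi)$ implies nonvanishing of $\CF_{s_{t_{i-1}},u}(\pi)$.

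For this elementary deformation step, the main tool is the root exchange technique in the spirit of Ginzburg-Rallis-Soudry and of M\oe glin-Waldspurger \cite{MW87}. One would identify, inside the root-space symmetric difference between $\mathfrak{n}_{s_{t_{i-1}}, u}$ and $\mathfrak{n}_{s_{t_i}, u}$, a pair of $F$-rational complementary isotropic subspaces $V^{+}$ and $V^{-}$ for the symplectic form $\omega_u$. Unfolding the integral defining $\CF_{s_{t_i},u}$ against $\psi_u^{-1}$ and reintegrating over the unipotent subgroup corresponding to $V^{-}$ produces a Fourier expansion that realizes $\CF_{s_{t_i}, u}$ in terms of $\CF_{s_{t_{i-1}}, u}$ of right-translates of $\phi$. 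Since $V^{+}$ and $V^{-}$ are paired nondegenerately by $\omega_u$ and $\psi_u$ restricts trivially to each, the exchange is lossless: nonvanishing of $\CF_{s_{t_i},u}(\pi)$ forces nonvanishing of $\CF_{s_{t_{i-1}}, u}(\pi)$.

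The main obstacle is precisely the verification that at each critical value the relevant root spaces decompose as such a pair of complementary isotropic subspaces under $\omega_u$, together with the identification of an intermediate unipotent group that allows the exchange to proceed cleanly on the automorphic side. This requires a careful weight analysis combining the $\mathfrak{sl}_2$-representation theory of $(v,s,u)$, which controls the kernel and image of $\mathrm{ad}(u)$ on each $\mathrm{ad}(s)$-weight space, with the simultaneous grading by $\mathrm{ad}(s')$; the key input is that the restriction of $\omega_u$ to $\mathfrak{g}^s_1$ is nondegenerate modulo $\mathfrak{g}^s_1 \cap \mathfrak{g}_u$, which is the abstract reason behind the formula $\mathfrak{n}_{s,u} = \mathfrak{g}^s_{\geq 2} + \mathfrak{g}^s_1 \cap \mathfrak{g}_u$. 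Once the isotropic decomposition is confirmed at every critical value, iterating through $i = N, N-1, \ldots, 1$ yields the implication $\CF_{s',u}(\pi) \neq 0 \Rightarrow \CF_{s,u}(\pi) \neq 0$.
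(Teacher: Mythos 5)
The first thing to say is that the paper contains no proof of this statement: Proposition \ref{ggsglobal1} is imported verbatim as Theorem C of \cite{GGS17a} and is used as a black box. So what you have written is not an alternative to an argument in the paper but an attempted reconstruction of the Gomez--Gourevitch--Sahi theorem itself. Your road map does match the strategy of that source: reduce to the case where $s$ and $s'$ commute, deform through a finite chain of Whittaker pairs $(s_t,u)$, and transfer nonvanishing across each elementary step by an exchange of $\omega_u$-isotropic subspaces combined with Fourier expansion, with the formula $\mathfrak{n}_{s,u}=\mathfrak{g}^s_{\geq 2}+\mathfrak{g}^s_1\cap\mathfrak{g}_u$ (\cite[Lemma 3.2.6]{GGS17a}) as the underlying reason the exchange can work.

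As a proof, however, there is a genuine gap, and it is the one you flag yourself: the claim that at each critical value of $t$ the change in $\mathfrak{n}_{s_t,u}$ is governed by complementary $\omega_u$-isotropic rational subspaces $V^{\pm}$, and that the resulting exchange is ``lossless,'' is asserted rather than proved. This is not a routine verification; it is the entire technical content of Theorem C. In particular, on the adelic side one direction of each step is an honest further integration, but the other is a Fourier expansion over $[V^-]$ whose nontrivial terms must be identified with conjugates (by rational points, acting through automorphy) of the coefficient attached to the adjacent pair; making that identification requires the precise commutation relations among $V^{\pm}$, $N_{s_t,u}$ and $\psi_u$, and is exactly where \cite{GGS17a} must introduce auxiliary (quasi-)coefficients and argue carefully. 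Without that, the inductive scheme $\CF_{s_{t_i},u}(\pi)\neq 0\Rightarrow\CF_{s_{t_{i-1}},u}(\pi)\neq 0$ is a plan, not a proof. A smaller but real gap is the opening reduction: the existence of $X\in\mathfrak{g}_u(F)$ with $[\Ad(\exp X)s',s]=0$ is equivalent to finding a neutral element for $u$ commuting with $s'$ together with rational conjugacy of $\mathfrak{sl}_2$-triples over $u$ (Jacobson--Morozov adapted to the $\mathrm{ad}(s')$-grading plus Kostant's theorem over $F$); ``using the structure of $\mathfrak{g}_u$'' does not discharge this. For the purposes of this paper the correct move is the one the authors make: cite \cite[Theorem C]{GGS17a} rather than reprove it.
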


In the rest of the paper, we consider the case of $\mathrm{G}=\GL_n$. In this case, nilpotent orbits are in one-to-one correspondence with partitions of $n$. Given a partition $\mu$ of $n$, by a Fourier coefficient of an automorphic form $\phi$ attached to $\mu$, we mean a generalized Whittaker-Fourier coefficient of $\phi$ attached to the corresponding nilpotent orbit. Given an automorphic representation $\pi$ of $\GL_n(\BA)$, let 
$\mathfrak{p}(\pi)$ and $\mathfrak{p}^m(\pi)$ be the set of partitions parameterizing nilpotent orbits in 
$\mathfrak{n}(\pi)$ and $\mathfrak{n}^m(\pi)$, respectively. Unless otherwise mentioned, by default, we use the natural ordering (i.e., the dominance ordering) for partitions. 



For latter use, we introduce a particular degenerated Whittaker-Fourier coefficients in the case of $\GL_n$. 
Let $\lambda=[p_1 p_2 \cdots p_m]$ be a partition of $n$. 
Let 
$$u_\lambda = \frac{1}{2n} \left(\sum_{i=1}^{m} \sum_{j=1}^{p_i-1} (e_{j+1} - e_j) (1)\right)$$
be a representative of the nilpotent orbit corresponding to $\lambda$, and let $s_n$ be the semi-simple element
$$
\diag\left(\frac{n-1}{2}, \frac{n-3}{2}, \ldots, \frac{1-n}{2}\right).
$$
Then $(s_n,u_\lambda)$ is a Whittaker pair. Here the multiplication of $\frac{1}{2n}$ in $u_{\lambda}$ is due to the difference between the Killing form and the trace form for general linear Lie algebras. 
For an automorphic form $\phi$ on $\GL_n(\BA)$, we will consider the degenerate Whittaker-Fourier coefficient
$$\CF_{s_n,u_\lambda}(\phi)(g):=\int_{[N_{s_n,u_\lambda}]} \phi(ng) \psi_{u_\lambda}^{-1}(n) \, \mathrm{d}n\,.$$
We note that this is the {\it $\lambda$-semi-Whittaker coefficient of $\phi$} defined in \cite{C16}. 

\medskip

\subsection{A criterion on determining local top orbits}\label{subsection: semi-Whittaker}

The generalized and degenerate Whittaker-Fourier coefficients also have their local analogues, which are certain local models. 
Let $k$ be a local field. For an irreducible smooth admissible representation $\pi$ of $\GL_n(k)$, we say that $\pi$ has a non-zero {\it degenerate Whittaker model} attached to a Whittaker pair $(s,u)$ if 
\begin{equation}\label{Whittaker model}
	\Hom_{N_{s,u}(k)}(\pi, \psi_{u})\neq 0\, .
\end{equation}
Here $N_{s,u}$ and $\psi_u$ have the same definitions as in the global setting in \S \ref{subsection: Gen and Deg FC}, and we use the same convention for admissible representations as in \cite[\S 1.1]{GGS17a}.
Moreover, if $(s,u)$ is a neutral pair, then we say that $\pi$ has a non-zero {\it generalized Whittaker model} attached to $(s,u)$ in case that (\ref{Whittaker model}) holds. 
We also have the analogous definitions for $\mathfrak{n}(\pi)$, $\mathfrak{n^m}(\pi)$, $\mathfrak{p}(\pi)$, and $\mathfrak{p^m}(\pi)$, respectively.




We have the following criterion for $\mathfrak{p^m}(\pi)$:

\begin{prop}\label{Degenerate Whittaker and Semi-Whittaker}
Let $\mu=[p_1 p_2 \cdots p_m]$ be a partition of $n$. Let $\pi$ be an irreducible admissible representation of $\GL_n(k)$,
then the following are equivalent:
\begin{enumerate}
	\item $\mathfrak{p^m}(\pi)=\{\mu\}$; 
	\item the representation $\pi$ has a non-zero degenerate Whittaker model attached to the Whittaker pair $(s_n, u_{\mu})$, and has no non-zero degenerate Whittaker model attached to the Whittaker pair $(s_n, u_{\lambda})$ for any partition
    $\lambda$ of $n$ which is bigger than or not related to $\mu$. 
\end{enumerate}
\end{prop}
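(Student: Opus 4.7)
The plan is to prove the equivalence by comparing two characterizations of the ``top orbit'' of $\pi$: one defined through generalized Whittaker models at neutral pairs (this is $\mathfrak{p}^m(\pi)$), and one defined through degenerate Whittaker models at the standard Whittaker pairs $(s_n,u_\lambda)$. Two inputs make the comparison work. First, the local analogue of Proposition \ref{ggsglobal1} from \cite{GGS17a} implies that if the degenerate Whittaker model at $(s_n,u_\lambda)$ is nonzero then the generalized Whittaker model at the neutral pair $(s,u_\lambda)$ is nonzero, so $\lambda\in\mathfrak{p}(\pi)$. Second, the theorem of M\oe glin--Waldspurger \cite{MW87} in the non-archimedean case, together with its archimedean extension by Varma \cite{V14}, gives the converse at the level of maximal elements: if $\lambda\in\mathfrak{p}^m(\pi)$, then $\pi$ admits a nonzero degenerate Whittaker model at $(s_n,u_\lambda)$.

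For $(1)\Rightarrow(2)$, assume $\mathfrak{p}^m(\pi)=\{\mu\}$. The M\oe glin--Waldspurger--Varma result applied to $\mu$ directly gives that $\pi$ has a nonzero degenerate Whittaker model at $(s_n,u_\mu)$. Suppose toward a contradiction that $\pi$ also admits a nonzero degenerate Whittaker model at $(s_n,u_\lambda)$ for some partition $\lambda$ that is strictly bigger than or incomparable with $\mu$. Then the local analogue of Proposition \ref{ggsglobal1} (taking $s'=s_n$ and $s$ a neutral element for $u_\lambda$) places $\lambda$ in $\mathfrak{p}(\pi)$. But $\lambda>\mu$ contradicts the maximality of $\mu\in\mathfrak{p}^m(\pi)$, whereas $\lambda$ incomparable with $\mu$ forces $\mathfrak{p}^m(\pi)$ to contain a maximal element distinct from $\mu$; either case contradicts the uniqueness asserted in (1).

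For $(2)\Rightarrow(1)$, applying the local analogue of Proposition \ref{ggsglobal1} to the given nonvanishing at $(s_n,u_\mu)$ yields $\mu\in\mathfrak{p}(\pi)$, so in particular $\mathfrak{p}^m(\pi)$ is nonempty. Let $\lambda\in\mathfrak{p}^m(\pi)$ be arbitrary; by the M\oe glin--Waldspurger--Varma result, $\pi$ admits a nonzero degenerate Whittaker model at $(s_n,u_\lambda)$, so by the vanishing part of hypothesis (2) the partition $\lambda$ is neither strictly bigger than nor incomparable with $\mu$, i.e., $\lambda\le\mu$. Combined with $\mu\in\mathfrak{p}(\pi)$ and the maximality of $\lambda$, this forces $\lambda=\mu$, whence $\mathfrak{p}^m(\pi)=\{\mu\}$. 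The substantive content of the proof is entirely packed into the M\oe glin--Waldspurger and Varma theorems, which supply the nontrivial direction ``$\lambda$ maximal in $\mathfrak{p}(\pi)$ implies nonvanishing at $(s_n,u_\lambda)$''; the rest is straightforward bookkeeping with the dominance order on partitions of $n$.
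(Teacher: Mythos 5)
Your argument is correct and is essentially the paper's approach: the paper's proof is just the citation of \cite{MW87} and \cite{V14} (with Remark \ref{rmk0} noting that the local results of \cite{GGS17a, GGS17b} also give the criterion), and your two implications, together with the dominance-order bookkeeping, are exactly the content of those results. One small correction: Varma's paper \cite{V14} extends M\oe glin--Waldspurger to residual characteristic $2$, not to archimedean fields; for archimedean $k$ the criterion would instead come from \cite{GGS17a, GGS17b}, though the paper only needs the non-archimedean case.
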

\begin{proof}
The criterion is a special case of the general results in \cite{MW87} and \cite{V14}.
\end{proof}

\begin{rmk} \label{rmk0}
The more recent works of Gomez, Gourevitch and Sahi $($\cite{GGS17a, GGS17b}$)$ generalize the works in \cite{MW87} and \cite{V14}, and hence also give the above local criterion.
In \cite{C16}, Cai also suggested a global criterion 
$($see \cite[Proposition 5.3]{C16}$)$. However, we found that there is a gap in the argument for \cite[Lemma 5.7]{C16}, where the non-trivial orbit in the expansion of the inner integral can not always give the Fourier coefficient for the claimed larger partition.
As pointed out to us by Cai, this global criterion can be deduced from the global results \cite[Theorem C]{GGS17a} and \cite[Theorem 8.0.3]{GGS17b}.
\end{rmk}

\section{Certain automorphic representations of $\GL_n$}\label{section: isobaric sum}

\subsection{Structure of discrete spectrum for $\GL_n$}\label{subsection: Speh}

It was a conjecture of Jacquet (\cite{J84}) and then a theorem of M\oe glin and Waldspurger (\cite{MW89})
that an irreducible automorphic representation $\pi$ of $\GL_n(\BA)$ occurring in the discrete spectrum of the space of all square-integrable
automorphic forms on $\GL_n(\BA)$ is parameterized by a pair $(\tau,b)$ with $\tau$ being an irreducible unitary cuspidal automorphic representation
of $\GL_a(\BA)$ such that $n=ab$. In particular, we have $b=1$ if $\pi$ is cuspidal.

More precisely, for $n=ab$ with $b>1$, we take the standard parabolic subgroup $P_{a^b}=M_{a^b}N_{a^b}$ of $\GL_{ab}$, with the Levi part
$M_{a^b}$ isomorphic to
$\GL_a^{\times b}=\GL_a\times\cdots\times\GL_a$ ($b$ copies).
Following the theory of Langlands (see \cite{L76} and \cite{MW95}), there is an Eisenstein series
$E(\phi_{\tau^{\otimes b}}, \underline{s},g)$ attached to the cuspidal datum $(P_{a^b},\tau^{\otimes b})$ of $\GL_{ab}(\BA)$, where
$\underline{s}=(s_1,\cdots,s_b)\in\BC^b$. This Eisenstein series converges absolutely for the real part of $\underline{s}$ belonging to
a certain cone and has a meromorphic continuation to the whole complex space $\BC^b$. Moreover, it has an iterated residue at
$$
\underline{s}_0=\Lambda_b:=(\frac{1-b}{2},\frac{3-b}{2},\cdots,\frac{b-1}{2})\,,
$$
which is given by
\begin{equation}\label{res}
E_{-1}(\phi_{\tau^{\otimes b}}, g) =
\lim_{\underline{s} \rightarrow \Lambda_b} \prod_{i=1}^{b-1} (s_{i+1} - s_{i}-1) E(\phi_{\tau^{\otimes b}}, \underline{s}, g)\,.
\end{equation}
It is square-integrable, and hence belongs to the discrete spectrum of the space of all square-integrable
automorphic forms of
$\GL_{ab}(\BA)$. Denote by $\Delta(\tau,b)$ the automorphic representation generated by all the residues $E_{-1}(\phi_{\tau^{\otimes b}}, g)$.
M\oe glin and Waldspurger (see \cite{MW89}) proved that $\Delta(\tau,b)$ is irreducible, and
any irreducible non-cuspidal automorphic representation occurring in the discrete spectrum of the general linear group $\GL_n(\BA)$ is of this form
for some $a\geq 1$ and $b>1$ such that $n=ab$, and has multiplicity one. 
Moreover, The representation $\Delta(\tau,b)$ can be regarded as the {\it unique} irreducible subrepresentation of the induced representation
$$
\pi_{\tau,b}:=\Ind_{P_{a^b}(\BA)}^{\GL_n(\BA)} \tau \lvert \cdot \rvert^{\frac{1-b}{2}} \otimes \tau \lvert \cdot \rvert^{\frac{3-b}{2}} \otimes \cdots \otimes \tau \lvert \cdot \rvert^{\frac{b-1}{2}}\,.
$$
Here the notation $\lvert \cdot \rvert$ stands for $\lvert \det(\cdot) \rvert$ for short.

Let $\ell_b=\lceil \frac{b}{2} \rceil$ and $k_b=\lfloor \frac{b}{2}\rfloor$. 
Define $\iota_{\tau,b}$ to be the evaluation map
\begin{align*}
s_b^{(\ell_b)} & \mapsto \frac{1-b}{2}\,,\\
s_b^{(\ell_b-1)} & \mapsto \frac{3-b}{2}\,,\\
& \cdots\\
h_b^{(k_b-1)} & \mapsto \frac{b-3}{2}\,,\\
h_b^{(k_b)} & \mapsto \frac{b-1}{2}\,,
\end{align*}
on a set of parameters 
$\{s_b^{(\ell_b)}, \ldots, s_b^{(1)}, h_b^{(1)}, \ldots, h_b^{(k_b)}\}$ with $b$ entries.
Let 
\begin{equation}\label{rep with parameters}
\pi_{\tau,b}' := \Ind_{P_{a^b}(\BA)}^{\GL_n(\BA)} \tau \lvert \cdot \rvert^{s_b^{(\ell_b)}} \otimes \cdots \otimes \tau \lvert \cdot \rvert^{s_b^{(1)}} \otimes \tau \lvert \cdot \rvert^{h_b^{(1)}} \otimes \cdots \otimes \tau \lvert \cdot \rvert^{h_b^{(k_b)}}\,.\end{equation}
Then the map $\iota_{\tau,b}$ naturally induces a map from 
$\pi_{\tau,b}'$ to $\pi_{\tau,b}$, which we still denote by $\iota_{\tau,b}$.

\subsection{Certain automorphic representations of $\GL_n$}\label{subsection:isobaric sum}

Write $n=\sum_{i=1}^r a_i b_i$, where $a_i$ and $b_i$ are both positive integers. 
For $1 \leq i\leq r$, let $\tau_i$ be an irreducible unitary cuspidal automorphic representation of $\GL_{a_i}(\BA)$,
and $\Delta(\tau_i,b_i)$ be the corresponding representation in the discrete spectrum of $\GL_{a_i b_i}(\BA)$.
Let $P=M N$ be a parabolic subgroup of $\GL_n$ with Levi subgroup $M$ isomorphic to $\GL_{a_1b_1} \times \cdots \times \GL_{a_r b_r}$. 
In this paper, we mainly consider induced representations
$$\Pi_{\underline{s}} = \Ind^{\GL_n(\BA)}_{P(\BA)}
\Delta(\tau_1,b_1) \lvert \cdot \rvert^{s_1} 
\otimes \cdots \otimes \Delta(\tau_r,b_r) \lvert \cdot \rvert^{s_r},$$
where $\underline{s}=(s_1,\cdots, s_r)\in \BC^r$ satisfies Assumption \ref{assumption}.
By Langlands' theory of Eisenstein series (see \cite{L76, L79a}), as automorphic representations, each constituent of $\Pi_{\underline{s}}$ is realized via meromorphic continuation of certain Eisenstein series or their residues. 

Denote
$$
\Pi=\Delta(\tau_1, b_1) \boxplus \Delta(\tau_2, b_2) \boxplus \cdots \boxplus \Delta(\tau_r, b_r)
$$
to be the induced representation 
$$
\Ind_{P(\BA)}^{\GL_n(\BA)} \Delta(\tau_1, b_1) \otimes \cdots \otimes \Delta(\tau_r, b_r)\,.
$$
Then $\Pi$ is an irreducible unitary automorphic representation of $\GL_n(\BA)$. 
In sense of \cite[Section 2]{L79b} (see also \cite[Section 1.3]{A13}), the representation $\Pi$ is {\it isobaric}.

The main purpose of this paper is to study the top generalized Whittaker-Fourier coefficients of $\Pi_{\underline{s}}$ (see Theorem \ref{main thm}). In particular, we will 
prove the following conjecture proposed by Jiang in \cite{J14} on top generalized Whittaker-Fourier coefficients of $\Pi$. This is a direct corollary of Theorem \ref{main thm} in \S 1.  

\begin{cor}[Conjecture 4.1, \cite{J14}]\label{Jiang conjecture}
For isobaric automorphic representations 
$\Pi$ as above, we have
$$
\mathfrak{p}^m(\Pi)
= \{[a_1^{b_1}]+\cdots +[a_r^{b_r}]\} \,.
$$
\end{cor}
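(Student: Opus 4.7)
The plan is to deduce Corollary \ref{Jiang conjecture} directly from Theorem \ref{main thm} by specializing the complex parameter $\underline{s}$ appropriately. First I would observe that the isobaric representation
$$\Pi = \Delta(\tau_1,b_1) \boxplus \cdots \boxplus \Delta(\tau_r,b_r) = \Ind^{\GL_n(\BA)}_{P(\BA)} \Delta(\tau_1,b_1) \otimes \cdots \otimes \Delta(\tau_r,b_r)$$
is precisely the induced representation $\Pi_{\underline{s}}$ defined in \S\ref{subsection:isobaric sum} with the choice $\underline{s} = (0,0,\ldots,0) \in \BC^r$, since twisting each factor by $\lvert\cdot\rvert^{s_i}$ with $s_i = 0$ is trivial.

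Next I would verify that this particular $\underline{s}$ satisfies Assumption \ref{assumption}. For every pair of indices $i,j$ we have $\Re(s_i - s_j) = 0$. If $b_i$ and $b_j$ have the same parity, then $0 \in [0,1)$, which lies in the allowed set in condition (1). If $b_i$ is odd and $b_j$ is even, then $0 \in [-\tfrac{1}{2}, \tfrac{1}{2})$, which lies in the allowed set in condition (2). Hence Assumption \ref{assumption} is automatically satisfied, as already noted in the statement of the main theorem.

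Having checked the hypothesis, I would invoke Theorem \ref{main thm} directly: for the representation $\Pi_{\underline{s}}$ with $\underline{s} = (0,\ldots,0)$, we have
$$\mathfrak{p}^m(\Pi_{\underline{s}}) = \{[b_1^{a_1}\cdots b_r^{a_r}]^t\} = \{[a_1^{b_1}] + \cdots + [a_r^{b_r}]\}.$$
Since $\Pi_{\underline{s}} = \Pi$ under this specialization, the conclusion of the corollary follows immediately. There is no real obstacle here beyond recording the identification and the trivial verification of the inequalities in Assumption \ref{assumption}; all of the genuine work (the non-vanishing at the partition $[a_1^{b_1}] + \cdots + [a_r^{b_r}]$ and the vanishing above it and at unrelated partitions) has already been absorbed into the proof of Theorem \ref{main thm}.
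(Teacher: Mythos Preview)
Your proposal is correct and mirrors the paper's own treatment: the paper explicitly states that the corollary is a direct consequence of Theorem \ref{main thm}, noting that for $\Pi$ one takes $\underline{s}=(0,\ldots,0)$ and that Assumption \ref{assumption} then holds automatically. Your verification of the two parity cases in Assumption \ref{assumption} is exactly the check the paper leaves implicit.
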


\section{Proof of Theorem \ref{main thm}: non-vanishing for the top orbit}\label{section:nonvanishing}

In this section, we show that the representation
$$
\Pi_{\underline{s}} = \Ind^{\GL_n(\BA)}_{P(\BA)}
\Delta(\tau_1,b_1) \lvert \cdot \rvert^{s_1} 
\otimes \cdots \otimes \Delta(\tau_r,b_r) \lvert \cdot \rvert^{s_r}
$$
has a nonzero generalized Whittaker-Fourier coefficient attached to the partition 
$$[a_1^{b_1}]+\cdots +[a_r^{b_r}]\,.$$

The main idea is to show that $\Pi_{\underline{s}}$ is a subquotient of a representation induced from certain parabolic subgroup and generic data. Before carrying out the argument, we first explain the steps using an explicit example. 
For convenience, we will denote the induced representation
$\Pi_{\underline{s}}$ as 
$$\Delta(\tau_1,b_1) \lvert \cdot \rvert^{s_1} 
\times \cdots \times \Delta(\tau_r,b_r) \lvert \cdot \rvert^{s_r}.$$ 

\begin{exmp}
	We consider representation
$$
\Pi_{\underline{s}}=\Delta(\tau_1, 3) \lvert \cdot \rvert^{s_1} \times \Delta(\tau_2, 4)\lvert \cdot \rvert^{s_2} \times \Delta(\tau_3, 5)\lvert \cdot \rvert^{s_3}\,,
$$
where $\tau_i$ is a unitary cuspidal representation of $\GL_{a_i}(\BA)$, $1 \leq i \leq 3$. 
Note that 
$\Delta(\tau_1, 3)$ is the unique irreducible subrepresentation of 
$$\pi_{\tau_1,3}=\tau_1 \lvert \cdot \rvert^{-1} \times \tau_1 \lvert \cdot \rvert^{0}
\times \tau_1 \lvert \cdot \rvert^{1}\,;$$
$\Delta(\tau_2, 4)$ is the unique irreducible subrepresentation of 
$$\pi_{\tau_2,4}=\tau_2 \lvert \cdot \rvert^{-\frac{3}{2}} \times \tau_2 \lvert \cdot \rvert^{-\frac{1}{2}}
\times \tau_2 \lvert \cdot \rvert^{\frac{1}{2}} \times \tau_2 \lvert \cdot \rvert^{\frac{3}{2}} \,;$$
and $\Delta(\tau_3, 5)$ is the unique irreducible subrepresentation of 
$$\pi_{\tau_3,5}=\tau_3 \lvert \cdot \rvert^{-2} \times \tau_3 \lvert \cdot \rvert^{-1}\times \tau_3 \lvert \cdot \rvert^{0}
\times \tau_3 \lvert \cdot \rvert^{1} \times \tau_3 \lvert \cdot \rvert^{2} \,.$$
Then $\Pi_{\underline{s}}$ is a subquotient of 
$$\pi_{\tau_1,3}\lvert \cdot \rvert^{s_1} \times \pi_{\tau_2,4}\lvert \cdot \rvert^{s_2} \times \pi_{\tau_3,5}\lvert \cdot \rvert^{s_3}\,.$$

We put all the inducing data in a table as follows:
\begin{center}
\begin{tabular}{ |c|c|c|c|c| } 
 \hline
  & $\tau_1 \lvert \cdot \rvert^{-1+s_1}$ & $\tau_1 \lvert \cdot \rvert^{0+s_1}$ & $\tau_1 \lvert \cdot \rvert^{1+s_1}$ &  \\ 
 \hline
 & $\tau_2 \lvert \cdot \rvert^{-\frac{3}{2}+s_2}$ & $\tau_2 \lvert \cdot \rvert^{-\frac{1}{2}+s_2}$ & $\tau_2 \lvert \cdot \rvert^{\frac{1}{2}+s_2}$ & $\tau_2 \lvert \cdot \rvert^{\frac{3}{2}+s_2}$ \\ 
 \hline
 $\tau_3 \lvert \cdot \rvert^{-2+s_3}$ & $\tau_3 \lvert \cdot \rvert^{-1+s_3}$ & $\tau_3 \lvert \cdot \rvert^{0+s_3}$ & $\tau_3 \lvert \cdot \rvert^{1+s_3}$ & $\tau_3 \lvert \cdot \rvert^{2+s_3}$ \\ 
 \hline
\end{tabular}
\end{center}
For $\Delta(\tau_1,3)$ and $\Delta(\tau_3,5)$, we put the inducing components $
\tau_i\lvert\cdot\rvert^{0+s_i}$ into the same column, and for $\Delta(\tau_2,4)$, we also put $\tau_2\lvert\cdot \rvert^{-\frac{1}{2}+s_2}$ into the same column as above. 
Then we put the other inducing components, with order unchanged, into the corresponding rows. 
The placement of the inducing data of $\Delta(\tau_2,4)$ is not unique, for example, we can also put $\tau_2\lvert\cdot \rvert^{\frac{1}{2}+s_2}$ into the center column of the above table. 
The point is, if we rearrange all the inducing data by columns of the above table, i.e., let 
\begin{align*}
\eta_1 & = \tau_1 \lvert \cdot \rvert^{-1+s_1} \times 
\tau_2 \lvert \cdot \rvert^{-\frac{3}{2}+s_2} \times
\tau_3 \lvert \cdot \rvert^{-1+s_3}\,,\\
\eta_2 & = \tau_1 \lvert \cdot \rvert^{0+s_1} \times 
\tau_2 \lvert \cdot \rvert^{-\frac{1}{2}+s_2} \times
\tau_3 \lvert \cdot \rvert^{0+s_3}\,,\\
\eta_3 & = \tau_1 \lvert \cdot \rvert^{1+s_1} \times 
\tau_2 \lvert \cdot \rvert^{\frac{1}{2}+s_2} \times
\tau_3 \lvert \cdot \rvert^{1+s_3}\,,\\
\eta_4 & =  
\tau_2 \lvert \cdot \rvert^{\frac{3}{2}+s_2} \times
\tau_3 \lvert \cdot \rvert^{2+s_3} \,,\\
\eta_5 & =  
\tau_3 \lvert \cdot \rvert^{-2+s_3}\,,
\end{align*}
the $\eta_i$'s are irreducible generic representations of certain general linear groups under Assumption \ref{assumption}, and $\Pi_{\underline{s}}$ is a subquotient of 
$$\eta_1 \times \eta_2 \times \eta_3 \times \eta_4 \times \eta_5\,.$$
See the proof of Proposition \ref{nonvanishing} for more details.
Note that $\Pi_{\underline{s}}$ has a nonzero constant term with respect to the parabolic subgroup whose Levi subgroup is 
$$\GL_{a_1+a_2+a_3}^3(\BA) \times \GL_{a_2+a_3}(\BA) \times \GL_{a_3}(\BA)\,.$$
Hence,  
$\Pi_{\underline{s}}$ has a nonzero degenerate Whittaker-Fourier coefficient attached to the partition $[(a_1+a_2+a_3)^3(a_2+a_3)a_3]$ (see \S \ref{subsection: Gen and Deg FC} for the definition). 
By Proposition \ref{ggsglobal1}, $\Pi_{\underline{s}}$ also has a nonzero generalized Whittaker-Fourier coefficient attached to the partition $[(a_1+a_2+a_3)^3(a_2+a_3)a_3]$, which is exactly $[a_1^3]+[a_2^4]+[a_3^5]$.
\end{exmp}

Now we carry out the general argument and prove the following proposition. 

\begin{prop}\label{nonvanishing}
Assume that $s_i$'s satisfy Assumption \ref{assumption}. Then $\Pi_{\underline{s}}$ has a nonzero generalized Whittaker-Fourier coefficient attached to the partition $\mu=[a_1]^{b_1} + \cdots + [a_r]^{b_r}$.
\end{prop}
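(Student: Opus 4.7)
The plan is to realize $\Pi_{\underline{s}}$ as a subquotient of a representation induced from irreducible generic pieces whose induction parabolic is exactly $P_\mu$, deduce a non-vanishing degenerate Whittaker--Fourier coefficient attached to the Whittaker pair $(s_n, u_\mu)$, and then apply Proposition \ref{ggsglobal1} to upgrade to a generalized Whittaker--Fourier coefficient attached to a neutral pair.

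First, using the embedding $\Delta(\tau_i, b_i) \hookrightarrow \pi_{\tau_i, b_i}$ (see \S\ref{subsection: Speh}) together with induction in stages, I would realize $\Pi_{\underline{s}}$ as a subquotient of a fully-induced representation from the cuspidal data $\{\tau_i |\cdot|^{s_i + c} : 1 \leq i \leq r,\ c \in \{(1-b_i)/2, \ldots, (b_i-1)/2\}\}$. Following the example preceding the proposition, I arrange these cuspidal twists in a table whose $i$-th row lists the $b_i$ twists of $\tau_i$, with the rows center-aligned (a fixed half-step choice being made for even $b_i$). Re-bracketing the induction column-by-column exhibits $\Pi_{\underline{s}}$ as a subquotient of $\eta_1 \times \cdots \times \eta_N$, where $\eta_j$ is induced from the cuspidal entries in column $j$; the column sums of the table are exactly the parts of $\mu = [a_1^{b_1}] + \cdots + [a_r^{b_r}]$, so the induction parabolic matches $P_\mu$.

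The key technical step, which I expect to be the main obstacle, is to verify that each $\eta_j$ is irreducible and globally generic under Assumption \ref{assumption}. Two cuspidal twists $\tau_i|\cdot|^{s_i + c_{i,j}}$ and $\tau_{i'}|\cdot|^{s_{i'} + c_{i',j}}$ lying in the same column are Bernstein--Zelevinsky linked only if $\tau_i \cong \tau_{i'}$ (in particular $a_i = a_{i'}$) and $\Re(s_i - s_{i'}) + (c_{i,j} - c_{i',j}) = \pm 1$. The column offsets $c_{i,j} - c_{i',j}$ take values in $\{0, \pm 1/2\}$ depending on the parities of $b_i$ and $b_{i'}$, and the three permitted intervals in Assumption \ref{assumption} are engineered precisely to prevent this equality for some consistent choice of half-step alignment. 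Once linkedness is ruled out within each column, irreducibility and genericity of each $\eta_j$ follow from the standard theory of induced representations of cuspidals on general linear groups.

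Finally, since $\eta_1 \times \cdots \times \eta_N$ is parabolically induced from $P_\mu$ with generic Levi data, the composition of the constant term along $N_\mu$ with the Whittaker integral on each diagonal $\GL_{\mu_j}$-block is manifestly non-vanishing; unwinding the definition, this composite integral is precisely the degenerate Whittaker--Fourier coefficient $\CF_{s_n, u_\mu}$ of \S\ref{subsection: Gen and Deg FC}. Transferring this non-vanishing to the automorphic realization of $\Pi_{\underline{s}}$, via the constant term formula for the Eisenstein series realizing $\Pi_{\underline{s}}$ (which, under Assumption \ref{assumption}, contributes a non-zero intertwining-operator term matching $\eta_1 \otimes \cdots \otimes \eta_N$ along the Weyl element sending the Speh parabolic to $P_\mu$), yields $\CF_{s_n, u_\mu}(\Pi_{\underline{s}}) \neq 0$. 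Proposition \ref{ggsglobal1} then upgrades this to the non-vanishing of the generalized Whittaker--Fourier coefficient $\CF_{s, u_\mu}(\Pi_{\underline{s}})$ for a neutral pair $(s, u_\mu)$, which is exactly the claim of the proposition.
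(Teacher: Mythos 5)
Your overall strategy coincides with the paper's: the same center-aligned table and column-by-column re-bracketing, realizing $\Pi_{\underline{s}}$ as a subquotient of $\Ind_{Q(\BA)}^{\GL_n(\BA)}\eta_1\otimes\cdots\otimes\eta_{b_1}$ with $Q$ of type $\mu$, then identifying $\CF_{s_n,u_\mu}$ as constant term along $Q$ composed with the Whittaker coefficient on the Levi, and finally invoking Proposition \ref{ggsglobal1} to pass from the Whittaker pair $(s_n,u_\mu)$ to a neutral pair. The difference, and the gap, is in the step you yourself flag as the main obstacle: irreducibility and \emph{global} genericity of each column representation $\eta_j$. First, your claim that Assumption \ref{assumption} is engineered to prevent the linkedness equality is not correct at the boundary values the assumption permits: for $b_i$ odd and $b_j$ even the value $\Re(s_i-s_j)=-\tfrac{3}{2}$ is allowed, and since the odd-row and even-row entries of a column differ by $\tfrac12$ in their internal exponents, two entries of the same column can then differ by exactly $\lvert\cdot\rvert^{-1}$ (when $\tau_i\cong\tau_j$ up to a unitary twist with matching imaginary parts); that is, a linked pair inside a column is not excluded, so irreducibility cannot be deduced this way on all of the allowed parameter set. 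Second, Bernstein--Zelevinsky linkedness is a criterion for local parabolic induction from supercuspidals, whereas the local components of a cuspidal automorphic $\tau_i$ are merely generic unitary; and even granting abstract irreducibility, what the argument actually needs is a statement about the automorphic realization, namely that the constant term of $\Pi_{\underline{s}}$ along $Q$ has a nonzero Whittaker coefficient on $L(\BA)$.

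This is exactly where the paper uses genuinely global analytic input that your proposal replaces by an appeal to ``standard theory'': the poles of the Eisenstein series on each $\GL_{t_j}$ block attached to the cuspidal data of $\eta_j$ are governed by ratios of Rankin--Selberg $L$-functions; by M\oe glin--Waldspurger these $L$-functions have poles only at $0$ and $1$, and by Jacquet--Shalika and Shahidi they do not vanish for $\Re\geq 1$ or $\Re\leq 0$; together with Assumption \ref{assumption} (including the boundary cases above, where the relevant exponent difference is $-1$, a point of holomorphy) this shows the Eisenstein series is holomorphic at the evaluation point, whence each $\eta_j$ is irreducible and globally generic, and the nonzero constant term of $\Pi_{\underline{s}}$ along $Q$ (which the paper gets from the cuspidal support, not from isolating one intertwining-operator term -- your phrasing would also require ruling out cancellation among Weyl terms) therefore has nonzero Whittaker coefficient. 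Without this $L$-function step, or some substitute for it, your key claim about the $\eta_j$ is unproved, and at the permitted boundary values of $\underline{s}$ the linkedness route you propose actually fails.
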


\begin{proof}
Without loss of generality, we may assume that 
$b_1\geq b_2\geq \cdots \geq b_r$.
Then we can write $\mu = [t_1 t_2 \cdots t_{b_1}]$ with $t_1 \geq t_2 \geq \cdots \geq t_{b_1}$.
Recall from \S \ref{subsection:isobaric sum} that given an integer $b$, we have defined $\ell_b=\lceil \frac{b}{2} \rceil$ and $k_b=\lfloor \frac{b}{2}\rfloor$. 
For $1 \leq i \leq r$, we form parameters 
$$\left(s_{b_i}^{(\ell_{b_1})}, \ldots, s_{b_i}^{(1)}, h_{b_i}^{(1)}, \ldots, h_{b_i}^{(k_{b_1})}\right)\, $$
with $b_1$ entries by adding zeros from the front if $\ell_{b_i} < \ell_{b_1}$, and adding zeros from the end if $k_{b_i} < k_{b_1}$ (note that $b_1\geq b_j$ for $2\leq j\leq r$).
In other words, one has $s_{b_i}^{(j)}=0$ if $j>\ell_{b_i}$, and  $h_{b_i}^{(j)}=0$ if $j>k_{b_i}$.
Then, for $1 \leq j \leq k_{b_1}$, we construct representations 
$$
\sigma_j^1 = \tau_1 \lvert \cdot \rvert^{h_{b_1}^{(j)}+s_1} \times \cdots \times \tau_r \lvert \cdot \rvert^{h_{b_r}^{(j)}+s_r}\,,
$$
where we omit the $\tau_i \lvert \cdot \rvert^{h_{b_i}^{(j)}+s_i}$-term if $j>k_{b_i}$ ($1 \leq i \leq r$). Similarly, for $1 \leq q \leq \ell_{b_1}$, we construct representations 
$$
\rho_q^1 = \tau_1 \lvert \cdot \rvert^{s_{b_1}^{(q)}+s_1} \times \cdots \times \tau_r \lvert \cdot \rvert^{s_{b_r}^{(q)}+s_r}\,, 
$$
where we omit the $\tau_i \lvert \cdot \rvert^{s_{b_i}^{(q)}+s_i}$-term if $q>\ell_{b_i}$ ($1 \leq i \leq r$). 

For $1 \leq j \leq k_{b_1}$, we assume that $\sigma_j^1$ is a representation of $\GL_{n_j}(\BA)$,  and for $1 \leq q \leq \ell_{b_1}$, we assume that $\rho_q^1$ is a representation of $\GL_{m_q}(\BA)$. 
Note that the $n_j$'s and $m_q$'s are among the integers 
$$\left\{\sum_{i=1}^r \delta_i a_i\ |\ \delta_i=0\ \text{or}\ 1\right\}.$$
Then we have
$$[n_1\, n_2\, \ldots n_{k_{b_1}}\, m_1\, m_2 \,\ldots \, m_{\ell_{b_1}}] = [t_1\, t_2\, \ldots \, t_{b_1}\,] \,.$$
We rename the representations $\{\sigma_j^1, 1 \leq j \leq k_{b_1}, \rho_q^1, 1 \leq q \leq \ell_{b_1}\}$ as 
$\{\varepsilon_1, \ldots, \varepsilon_{b_1}\}$,  such that 
$\varepsilon_i$ is a representation of $\GL_{t_i}(\BA)$ ($1 \leq i \leq b_1$). 

Now we can rewrite the induced representation (recall that $\pi_{\tau_i,b_i}'$ is defined in (\ref{rep with parameters}))
$$
\Ind_{P(\BA)}^{\GL_n(\BA)} \pi_{\tau_1,b_1}' \lvert \cdot \rvert^{s_1} \otimes \cdots \otimes \pi_{\tau_r,b_r}'\lvert \cdot \rvert^{s_r}
$$
equivalently as 
$$
\Ind_{Q(\BA)}^{\GL_n(\BA)} \varepsilon_1 \otimes \cdots \otimes \varepsilon_{b_1}\,,
$$
where $Q=LV$ is the parabolic subgroup of $\GL_n$ with Levi subgroup $L\subset Q$ isomorphic to $\GL_{t_1} \times \cdots \times \GL_{t_{b_1}}$. 

Recall that we have defined maps $\iota_{\tau_i, b_i}$ in \S \ref{subsection: Speh}.
Then the set of maps $$\{\iota_{\tau_1,b_1}, \ldots, \iota_{\tau_r,b_r}\}$$ 
naturally induces a map $I_1$
on the representation
$$\Ind_{P(\BA)}^{\GL_n(\BA)}  \pi_{\tau_1,b_1}'\lvert \cdot \rvert^{s_1} \otimes \cdots \otimes \pi_{\tau_r,b_r}'\lvert \cdot \rvert^{s_r}\,,$$
and hence induces a map $I_2$ on the representation
$$
\Ind_{Q(\BA)}^{\GL_n(\BA)} \varepsilon_1 \otimes \cdots \otimes \varepsilon_{b_1}\,.
$$
By construction, one has ($\pi_{\tau_i,b_i}$ is also  defined in \S \ref{subsection: Speh})
$$
\begin{aligned}
&\quad \ I_1(\Ind_{P(\BA)}^{\GL_n(\BA)}  \pi_{\tau_1,b_1}' \lvert \cdot \rvert^{s_1}\otimes \cdots \otimes \pi_{\tau_r,b_r}'\lvert \cdot \rvert^{s_r})\\
&=\Ind_{P(\BA)}^{\GL_n(\BA)}  \pi_{\tau_1,b_1}\lvert \cdot \rvert^{s_1} \otimes \cdots \otimes \pi_{\tau_r,b_r}\lvert \cdot \rvert^{s_r}\,,
\end{aligned}$$
and  
$$
I_2 (\Ind_{Q(\BA)}^{\GL_n(\BA)} \varepsilon_1 \otimes \cdots \otimes \varepsilon_{b_1})=\Ind_{Q(\BA)}^{\GL_n(\BA)} \eta_1 \otimes \cdots \otimes \eta_{b_1}\,,$$
where $\eta_i$ is a representation of $\GL_{t_i}(\BA)$ ($1 \leq i \leq b_1$), coming from evaluating $s_{b_i}^{(q)}$'s and $h_{b_i}^{(j)}$'s in
$\varepsilon_i$.

Note that each $\eta_i$ is equivalent to a representation of the form
$$\tau_{\kappa_1} \lvert \cdot \rvert^{e_1+s_{\kappa_1}} \times \cdots \times \tau_{\kappa_\alpha} \lvert \cdot \rvert^{e_\alpha+s_{\kappa_\alpha}} \times \tau_{\gamma_1} \lvert \cdot \rvert^{f_1+s_{\gamma_1}} \times \cdots \times \tau_{\gamma_\beta} \lvert \cdot \rvert^{f_\beta+s_{\gamma_\beta}}\,,$$
where $1\leq \alpha+\beta\leq r$, 
$b_{\kappa_i}$'s are odd and $b_{\gamma_j}$'s are even, $e_i$'s and $f_j$'s are half-integers such that $e_1=\cdots=e_\alpha$, $f_1=\cdots=f_\beta$,
and $e_i-f_j =1/2$. 
We claim that each $\eta_i$ is an irreducible generic representation of $\GL_{t_i}(\BA)$.
Indeed, consider the Eisenstein series corresponding to an induced representation 
$$\rho_1 \lvert \cdot \rvert^{\nu_1} \times \cdots \times \rho_k \lvert \cdot \rvert^{\nu_k}\quad  (\nu_i \in \BC)\, ,$$
with $\rho_i$'s being irreducible unitary cuspidal automorphic representations. 
The calculation of constant term (see, for example, \cite[Chapter 6]{Sh10}) implies that the poles of the Eisenstein series are given by 
the ratio of Rankin-Selberg $L$-functions $$\prod_{1 \leq i<j \leq k}\frac{L(\nu_i-\nu_j, \rho_i\times \widetilde{\rho}_{j})}{L(1+\nu_i-\nu_j, \rho_{i}\times \widetilde{\rho}_{j})}\,.$$
By \cite[Appendice, Proposition and Corollaire]{MW89},
$L(\nu_i-\nu_j, \rho_{i}\times \widetilde{\rho}_{j})$ has only simple poles at $\nu_i-\nu_j=0, 1$,  and by \cite{JS76, JS81, Sh80, Sh81} (see also \cite[Theorem 4.3]{Cog07}), $L(\nu_i-\nu_j, \rho_{i}\times \widetilde{\rho}_{j})$ is non-vanishing for $\mathrm{Re}(\nu_i-\nu_j) \geq 1$ or $\mathrm{Re}(\nu_i-\nu_j) \leq 0$. Since $s_i$'s satisfy  Assumption \ref{assumption}, 
it is clear that the Eisenstein series corresponding to $\eta_i$ has no pole at the point 
$$(e_1+s_{\kappa_1},\cdots, e_\alpha+s_{\kappa_\alpha}, f_1+s_{\gamma_1},\cdots, f_\beta+s_{\gamma_\beta})\,.$$
Hence each $\eta_i$ is irreducible generic.


Recall that 
$\Delta(\tau_i, b_i)$ is the unique irreducible subrepresentation of 
$\pi_{\tau_i,b_i}$, then
$$
 \Pi_{\underline{s}} = \Ind_{P(\BA)}^{\GL_n(\BA)} \Delta(\tau_1, b_1)\lvert \cdot \rvert^{s_1} \otimes \cdots \otimes \Delta(\tau_r, b_r)\lvert \cdot \rvert^{s_r}
$$
is a subquotient of 
$$\Ind_{P(\BA)}^{\GL_n(\BA)}  \pi_{\tau_1,b_1}\lvert \cdot \rvert^{s_1} \otimes \cdots \otimes \pi_{\tau_r,b_r}\lvert \cdot \rvert^{s_r}\,,$$
hence, is also a subquotient of 
$$\Ind_{Q(\BA)}^{\GL_n(\BA)} \eta_1 \otimes \cdots \otimes \eta_{b_1}\,.$$
Let $P_{a_1^{b_1}, \cdots, a_r^{b_r}}$ be the parabolic subgroup of $\GL_n$ with Levi subgroup isomorphic to $\GL_{a_1}^{\times b_1} \times \cdots \times \GL_{a_r}^{\times b_r}$. 
Then one sees that $\Pi_{\underline{s}}$ has a nonzero constant term with respect to $P_{a_1^{b_1}, \cdots, a_r^{b_r}}$ by unfolding of Eisenstein series and the cuspidal support of $\Pi_{\underline{s}}$. 
Since the parabolic subgroup $Q$ contains the parabolic subgroup $P_{a_1^{b_1}, \cdots, a_r^{b_r}}$,  
$\Pi_{\underline{s}}$ also has a nonzero constant term with respect to $Q$. 

Let $$\displaystyle{u_{\mu} = \sum_{i=1}^{b_1} \sum_{j=1}^{t_i-1} (e_{j+1} - e_j) (1)}$$ 
be a representative of the nilpotent orbit $\CO$ corresponding to the partition $\mu=[t_1 t_2 \ldots t_{b_1}]$, and let $s$ be the semi-simple element 
$$
\diag\left(\frac{t_1-1}{2}, \ldots, \frac{1-t_1}{2}, \frac{t_2-1}{2}, \ldots, \frac{1-t_2}{2}, \ldots, \frac{t_{b_1}-1}{2}, \ldots, \frac{1-t_{b_1}}{2}\right).
$$
It is easy to see that $s$ is a neutral element for $u$, and hence $(s,u)$ is a neutral pair. 
Recall that we have defined another semi-simple element 
$$
s_n=\diag\left(\frac{n-1}{2}, \frac{n-3}{2}, \ldots, \frac{1-n}{2}\right)
$$
in \S \ref{subsection: Gen and Deg FC}, and $(s_n,u_{\mu})$ is also a Whittaker pair. 

Take $0 \neq f \in \Pi_{\underline{s}}$, and consider the degenerate Fourier coefficient
$\CF_{s_n,u_{\mu}}(f)$
attached to the Whittaker pair $(s_n,u_{\mu})$. It is easy to see that $\CF_{s_n,u_{\mu}}(f)$ is the constant term integral along the parabolic subgroup $Q$ composed with the Whittaker-Fourier coefficient along the Levi subgroup $L$. Note that $\Pi_{\underline{s}}$ is a constituent of the induced representation 
$$\Ind_{Q(\BA)}^{\GL_n(\BA)} \eta_1 \otimes \cdots \otimes \eta_{b_1}\,,$$ 
and $\eta_i$ is an irreducible generic representation of $\GL_{t_i}(\BA)$ for
$1 \leq i \leq b_1$. We claim that $\CF_{s_n,u_{\mu}}(f) \neq 0$. 
Indeed, the constant term of $f$ along $Q$ gives us a nonzero vector in the irreducible generic representation $\eta_1 \otimes \cdots \otimes \eta_{b_1}$ of $L(\BA)$, whose Whittaker-Fourier coefficient is hence nonzero. 
Then by Proposition \ref{ggsglobal1} we also have $\CF_{s,u_{\mu}}(f) \neq 0$, that is, $\Pi_{\underline{s}}$ has a nonzero generalized Whittaker-Fourier coefficient attached to the partition $\mu$.

This completes the proof of the proposition. 
\end{proof}

\begin{rmk} 
We give some remarks on the proof of Proposition \ref{nonvanishing}:
\begin{enumerate}
\item The same argument of the proof shows that any constituent $\pi$ of the induced representation 
$\Ind_{Q(\BA)}^{\GL_n(\BA)} \eta_1 \otimes \cdots \otimes \eta_{b_1}$ has a nonzero generalized Whittaker-Fourier coefficient attached to the partition $[a_1^{b_1}]+\cdots +[a_r^{b_r}]$. However, this partition may not be a maximal partition providing nonzero generalized Whittaker-Fourier coefficients for $\pi$. The main result of this paper shows that, for the constituents of $\Pi_{\underline{s}}$, this partition is indeed the maximal partition providing nonzero generalized Whittaker-Fourier coefficients.
\item By Generalized Riemann Hypothesis (GRH), given irreducible unitary cuspidal automorphic representations $\rho_1, \rho_2$, the zeros of the $L$-function $L(s,\rho_1 \times \rho_2)$ only lie on the line $\Re(s)=\frac{1}{2}$. Granting this, our Assumption \ref{assumption} can be simplified to be:
    \begin{itemize}
      \item[(i)] $\Re(s_i-s_j)\neq -\frac{1}{2}, 1$, if $b_i$ and $b_j$ have the same parity; 
      \item[(ii)] $\Re(s_i-s_j)\neq \frac{1}{2}, -1$, if $b_i$ is odd and $b_j$ is even.
    \end{itemize}
\item We apply the general result in \cite{GGS17a} (Proposition \ref{ggsglobal1}) to show that $\Pi_{\underline{s}}$ has a nonzero generalized Whittaker-Fourier coefficient attached to the partition $\mu=[a_1^{b_1}]+\cdots +[a_r^{b_r}]$, from the result that $\CF_{s_n,u_{\mu}}(f)\neq 0$ for some $f\in \Pi_{\underline{s}}$. 
Similar arguments have been used in \cite{JLX18} in the study of certain twisted automorphic descent constructions and a reciprocal problem of the global Gan-Gross-Prasad conjecture, and is expected to to applied in some more general situation.  
\end{enumerate}
\end{rmk}

\section{Proof of Theorem \ref{main thm}: vanishing for bigger and not related orbits}\label{section: vanishing}

In this section, we show that
$$
\Pi_{\underline{s}} = \Ind^{\GL_n(\BA)}_{P(\BA)}
\Delta(\tau_1,b_1) \lvert \cdot \rvert^{s_1} 
\otimes \cdots \otimes \Delta(\tau_r,b_r) \lvert \cdot \rvert^{s_r}
$$
has no nonzero generalized Whittaker-Fourier coefficients attached to any partition either bigger than or not related to the partition $$\mu=[a_1^{b_1}]+\cdots +[a_r^{b_r}]\,.$$
Combining with Proposition \ref{nonvanishing},
this completes the proof of Theorem \ref{main thm}.

Assume that $b_1\geq b_2\geq \cdots \geq b_r$ as before, then
$$\mu= [(a_1+\cdots+a_r)^{b_r}(a_1+\cdots a_{r-1})^{b_{r-1}-b_r}\cdots (a_1+a_2)^{b_2-b_3} a_1^{b_1-b_2}]\,.$$
Note that $\mu^t = [b_1^{a_1} \cdots b_r^{a_r}]$. 
For any partition $\nu=[d_1 d_2 \cdots d_l]$ of $n$, we let $P_\nu$ be the standard parabolic subgroup of $\GL_N$ whose Levi subgroup is $M_\nu\simeq \GL_{d_1}\times \cdots \times \GL_{d_l}$,
and denote the corresponding unipotent subgroup by $N_\nu$.

The main result in this section is the following.

\begin{prop}\label{theorem for vanishing}
The representation $\Pi_{\underline{s}}$ has no nonzero generalized Whittaker-Fourier coefficients attached to any partition either bigger than or not related to the partition $\mu=[a_1^{b_1}]+\cdots +[a_r^{b_r}]$.
\end{prop}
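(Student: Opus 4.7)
The plan is to reduce the global vanishing assertion to a purely local vanishing statement at a single carefully chosen place $v$, and then to establish that local vanishing by a Jacquet-module/geometric-lemma computation in the spirit of Bernstein--Zelevinsky combined with the combinatorial input of Cai.

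First, I would carry out the global-to-local reduction as follows. Suppose, for a contradiction, that $\lambda$ is a partition of $n$ either strictly larger than $\mu$ or not related to $\mu$ (in the dominance order) for which $\Pi_{\underline{s}}$ admits a nonzero generalized Whittaker--Fourier coefficient. Then by Proposition \ref{ggsglobal1}, the associated degenerate coefficient $\CF_{s_n,u_\lambda}(\Pi_{\underline{s}})$ is also nonzero, since the global wave-front set is controlled by either formulation of the coefficient. By factorizing through the tensor-product decomposition $\Pi_{\underline{s}}=\otimes_v \Pi_{\underline{s},v}$ and using that non-vanishing of the global degenerate coefficient forces non-vanishing of the local functional at every place, we obtain
$$
\Hom_{N_{s_n,u_\lambda}(F_v)}\bigl(\Pi_{\underline{s},v},\,\psi_{u_\lambda,v}\bigr)\neq 0
$$
at every place $v$. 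Thus it suffices to exhibit a single place $v$ at which the displayed local Hom-space vanishes for every $\lambda$ bigger than or not related to $\mu$.

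Next, I would pick a finite place $v$ at which every $\tau_{i,v}$ is unramified and (hence) generic, so that $\Delta(\tau_{i,v},b_i)$ is the local Speh representation. By the theorem of M\oe glin--Waldspurger \cite{MW87} one has $\mathfrak{p}^m(\Delta(\tau_{i,v},b_i))=\{[a_i^{b_i}]\}$, and Proposition \ref{Degenerate Whittaker and Semi-Whittaker} then translates this into the vanishing/non-vanishing statement for the local degenerate Whittaker models of the inducing pieces. The local representation
$$
\Pi_{\underline{s},v}=\Ind^{\GL_n(F_v)}_{P(F_v)}\Delta(\tau_{1,v},b_1)\lvert\cdot\rvert^{s_1}\otimes\cdots\otimes\Delta(\tau_{r,v},b_r)\lvert\cdot\rvert^{s_r}
$$
is parabolically induced from these pieces.

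The heart of the argument, and the step I expect to be the main obstacle, is to show that the local top orbit of a parabolic induction is bounded by the sum of the local top orbits of the inducing data, i.e.\ that $\mathfrak{p}^m(\Pi_{\underline{s},v})\leq\{\mu\}$ in the dominance order. I would approach this by analyzing the restriction of $\Pi_{\underline{s},v}$ to $N_{s_n,u_\lambda}(F_v)$ via Bernstein's localization principle: decompose the induced space along $P(F_v)$-orbits in $\GL_n(F_v)/N_{s_n,u_\lambda}(F_v)$, then on each orbit express the resulting twisted Jacquet module of the Levi factor in terms of twisted Jacquet modules of the Speh pieces. The combinatorial input of Cai \cite[\S 5]{C16}, which identifies exactly how partitions behave under the operations of adding a column (parabolic induction) and summing (the $\boxplus$-operation), together with the fact that each Speh piece has top partition $[a_i^{b_i}]$ by the first step, then forces any partition $\lambda$ supporting a nonzero local degenerate Whittaker functional on $\Pi_{\underline{s},v}$ to satisfy $\lambda\leq\mu$ and to be comparable to $\mu$. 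The delicate point is that the twists by $\lvert\cdot\rvert^{s_i}$ must not create new larger-orbit contributions on intermediate orbits; this is precisely where Assumption \ref{assumption} enters, forbidding the borderline values of $\Re(s_i-s_j)$ that would allow additional Jacquet-module quotients to survive.

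Combining the three steps, the supposed $\lambda$ from the first step cannot exist, proving the proposition. Together with the non-vanishing for $\mu$ proved in Proposition \ref{nonvanishing}, this yields Theorem \ref{main thm}.
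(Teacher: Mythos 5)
Your global skeleton (reduce to vanishing of local degenerate Whittaker models at a single unramified place and conclude via the local criterion) matches the paper's strategy, but two things go wrong. First, a citation error: Proposition \ref{ggsglobal1} says that nonvanishing of $\CF_{s',u}$ for an \emph{arbitrary} Whittaker pair implies nonvanishing of $\CF_{s,u}$ for a neutral pair, not the converse; so it does not let you pass from a nonzero generalized coefficient attached to $\lambda$ to a nonzero degenerate coefficient $\CF_{s_n,u_\lambda}$ (globally that converse direction is the content of \cite[Theorem 8.0.3]{GGS17b}, cf.\ Remark \ref{rmk0}). This is repairable: pass to a constituent $\Pi$ of $\Pi_{\underline{s}}$ (which need not be irreducible as a whole), localize the generalized coefficient, and run the contradiction through the equivalence in Proposition \ref{Degenerate Whittaker and Semi-Whittaker} at the chosen place, which is exactly how the paper argues.

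Second, and more seriously, the step you yourself call ``the main obstacle'' --- bounding the local top orbit of the induced representation by $\mu$ --- is not actually carried out, and the route you sketch would not work as described. Cai's \cite[Theorem 1.3]{C16} is a root-theoretic statement about double cosets $P_{\mu^t}(F_v)\backslash \GL_n(F_v)/U(F_v)$ and the character $\psi_{u_\lambda}$ for inductions from a \emph{character} of the Levi $M_{\mu^t}$; it says nothing about twisted Jacquet modules of local Speh representations, and controlling those Jacquet modules on the intermediate Bruhat cells is precisely the hard analysis your sketch leaves open. The paper's key trick, which your proposal misses, is that at an unramified place $\Pi_v$ is the unramified constituent of the degenerate principal series $\Ind_{P_{\mu^t}(F_v)}^{\GL_n(F_v)}\varrho_v$ with $\varrho_v$ built from the characters $\chi_j^{(i)}(\det_{\GL_{b_i}})$ twisted by $\lvert\cdot\rvert^{s_i}$; since $\varrho_v^w$ is then trivial on $U(F_v)\cap w^{-1}P_{\mu^t}(F_v)w$, on every double coset the localized Hom space is killed outright by the element $u$ furnished by Cai's theorem (whenever $p_1+\cdots+p_i>t_1+\cdots+t_i$ for some $i$), and Bernstein's localization principle gives vanishing for the full induced representation, hence for $\Pi_v$. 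In particular no Jacquet-module computation for the Speh pieces is needed, and Assumption \ref{assumption} plays no role in the vanishing half --- it is used only in Proposition \ref{nonvanishing} to ensure the $\eta_i$ are irreducible and generic --- so your appeal to it here to rule out ``borderline'' values of $\Re(s_i-s_j)$ is misplaced.
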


\begin{proof}
For simplicity, write $\mu=[t_1t_2\cdots t_{b_1}]$, $t_1 \geq \cdots \geq t_{b_1}$. 
Let $\Pi$ be any constituent of $\Pi_{\underline{s}}$. 
By Proposition \ref{nonvanishing} and its proof, $\Pi$ has a nonzero degenerate Whittaker-Fourier coefficient attached to the Whittaker pair $(s_n, u_{\mu})$, hence, for any finite place $v$, $\Pi_{v}$ has a nonzero degenerate Whittaker model attached to this same Whittaker pair. Therefore, by Proposition \ref{Degenerate Whittaker and Semi-Whittaker}, we just need to show that, at some finite place $v$, $\Pi_{v}$ has no nonzero degenerate Whittaker model attached to the Whittaker pair $(s_n, u_{\lambda})$, 
for any partition $\lambda=[p_1 p_2 \cdots p_m]$ ($p_1 \geq \cdots \geq p_m$) of $n$ which is bigger than or not related to $\mu$.
Note that for any such partition $\lambda$, there exists $1 \leq i \leq m$ such that $p_1+\cdots +p_i > t_1+\cdots +t_i$.

By \cite[Lemma 1]{L79a}, constituents of $\Pi_{\underline{s}}$ are pairwise nearly equivalent. 
	We consider the local unramified components of any constituent $\Pi$.
	Let $v$ be a finite place such that $\Pi_{v}$ is unramified.
	For $1\leq i \leq r$, write  $\tau_{i,v}=\chi_1^{(i)}\times\cdots \times \chi_{a_i}^{(i)}$ with 
	$\chi_j^{(i)}$'s being unramified characters of $F_v^\times$, then $\Pi_{v}$ is the unique irreducible unramified constituent of the following induced representation
    $$\Ind_{P_{\mu^t}(F_v)}^{\GL_n(F_v)}\sigma_{1,v}\lvert \cdot \rvert^{s_1} \otimes\cdots \otimes \sigma_{r,v}\lvert \cdot \rvert^{s_r} \, ,$$ 
    where 
	$$\sigma_{i,v}=\chi_1^{(i)}(\det\!_{\GL_{b_i}})\times\cdots \times \chi_{a_i}^{(i)}(\det\!_{\GL_{b_i}})$$
	is a representation of $\GL_{a_i b_i}(F_v)$. 
    
    Write $\varrho_v=\sigma_{1,v}\lvert \cdot \rvert^{s_1} \otimes\cdots \otimes \sigma_{r,v}\lvert \cdot \rvert^{s_r} ,$
	we claim that	
	\begin{equation}
		\Hom_{U(F_v)}(\Ind_{P_{\mu^t}(F_v)}^{\GL_n(F_v)} \varrho_v, \psi_{u_\lambda,v}) = 0
	\end{equation}
	provided that there exists $1 \leq i \leq m$ such that $p_1+\cdots +p_i > t_1+\cdots +t_i$. This implies that $\Pi_{v}$ has no nonzero degenerate Whittaker model attached to the Whittaker pair $(s_n, u_{\lambda})$, for any partition $\lambda=[p_1 p_2 \cdots p_m]$ ($p_1 \geq \cdots \geq p_m$) of $n$ which is bigger than or not related to $\mu$. 
    Recall that $\psi_{u_\lambda,v}$ is defined in \S \ref{subsection: semi-Whittaker}.

	We use Bernstein's localization principle (see \cite[\S 6]{BZ76}) to study the $\Hom$-space
	\begin{equation}\label{key hom-space}
    \Hom_{U(F_v)}(\Ind_{P_{\mu^t}(F_v)}^{\GL_n(F_v)} \varrho_v, \psi_{u_\lambda,v})\,.
    \end{equation}
    By Frobenius reciprocity, this $\Hom$-space is equivalent to the space 
    \begin{equation}\label{key hom-space II}
    \Hom_{\GL_n(F_v)}(\Ind_{P_{\mu^t}(F_v)}^{\GL_n(F_v)} \varrho_v, \Ind_{U(F_v)}^{\GL_n(F_v)}\psi_{u_\lambda,v})\,.
    \end{equation}
    
    Let $$\CD(P_{\mu^t}(F_v)\bs \GL_n(F_v))$$ be the space of distributions on $P_{\mu^t}(F_v)\bs \GL_n(F_v)$, i.e., the complex linear functionals on $C_c^\infty(P_{\mu^t}(F_v)\bs \GL_n(F_v))$. 
    Let $$T\in \CD(P_{\mu^t}(F_v)\bs \GL_n(F_v))$$ be the distribution associated to the induced representation $$\Ind_{P_{\mu^t}(F_v)}^{\GL_n(F_v)} \varrho_v$$ in the sense of \cite[\S 6]{BZ76}.    
	Consider the right action of $U(F_v)$ on $P_{\mu^t}(F_v)\bs \GL_n(F_v)$ and the restriction of the distribution $T$ to the double coset $P_{\mu^t}(F_v) w U(F_v)$ with $w\in P_{\mu^t}(F_v)\bs \GL_n(F_v)/ U(F_v)$. 
	Such restriction is associated to the following compact induced representation 
	$$\ind_{U(F_v)\cap (w^{-1}P_{\mu^t}(F_v)w)}^{U(F_v)} \varrho_v^w\, ,$$ 
	where $\varrho_v^w(g)=\varrho_v(w g w^{-1})$ for $g\in U(F_v)\cap w^{-1}P_{\mu^t}(F_v)w$.
	By Frobenius reciprocity,
	\begin{equation}\label{localized Hom-spaces}
		\begin{split}
		&\quad\ \Hom_{U(F_v)}(\ind_{U(F_v)\cap (w^{-1}P_{\mu^t}(F_v)w)}^{U(F_v)} \varrho_v^w, \psi_{u_\lambda,v})\\
		&\simeq \Hom_{U(F_v)\cap (w^{-1}P_{\mu^t}(F_v)w)}(\varrho_v^w,\psi_{u_\lambda,v})\,. \end{split}
	\end{equation}
	Note that by construction, we have $\varrho_v^w|_{U(F_v)\cap (w^{-1}P_{\mu^t}(F_v)w)}\equiv 1$. 
	Moreover, by a root-theoretic result \cite[Theorem 1.3]{C16}, if there exists $1 \leq i \leq m$ such that $p_1+\cdots +p_i > t_1+\cdots +t_i$, then for any representative $w\in P_{\mu^t}(F_v) \bs \GL_n(F_v)/ U(F_v)$, there exists $u\in U(F_v)$ such that $\psi_{u_\lambda,v}(u)\neq 1$ and $wuw^{-1}\in P_{\mu^t}(F_v)$. 
	Therefore, the right hand side of (\ref{localized Hom-spaces}), and hence its left hand side, is identically zero for all $w\in P_{\mu^t}(F_v) \bs \GL_n(F_v)/ U(F_v)$.
		
	On the other hand, by \cite[Theorem 6.15]{BZ76}, the right action of $U(F_v)$ on the quotient $P_{\mu^t}(F_v)\bs \GL_n(F_v)$ is constructive. 
	Then by Bernstein's localization principle (see \cite[Theorem 6.9]{BZ76}), we have
	$$\Hom_{U(F_v)}(\Ind_{P_{\mu^t}(F_v)}^{\GL_n(F_v)} \varrho_v, \psi_{u_\lambda,v}) = 0\, .$$
	This proves the claim above and completes the proof of the proposition.	 
\end{proof}


\begin{thebibliography}{XXXX}

\addtocontents{Bibliography}

\bibitem[A13]{A13}
J. Arthur, {\it The Endoscopic Classification of Representations: Orthogonal and Symplectic Groups},
	American Mathematical Society Colloquium Publications, 2013.

\bibitem[BZ76]{BZ76}
I. N. Bernstein and A. V. Zelevinski, {\it Representations of $\GL(n,F)$, where $F$ is a non-Archimedean local field}, Russ. Math. Surveys \textbf{31}, No. 3, 1--68 (1976)

\bibitem[C16]{C16}
Y. Cai, {\it Fourier coefficients for degenerate Eisenstein series and the descending decomposition}, to appear in manuscripta mathematica, (2017).



\bibitem[Cog07]{Cog07}
J. Cogdell, {\it $L$-functions and converse theorems for $\GL_n$}. Automorphic forms and applications, 97--177, IAS/Park City Math. Ser., \textbf{12}, Amer. Math. Soc., Providence, RI, 2007. 

\bibitem[CM93]{CM93}
D. Collingwood and W. McGovern,
{\it Nilpotent orbits in semisimple Lie algebras.}
Van Nostrand Reinhold Mathematics Series. Van Nostrand Reinhold Co., New York, 1993. xiv+186 pp.

\bibitem[G06]{G06}
D. Ginzburg, {\it Certain conjectures relating unipotent orbits to automorphic representations}, Israel Journal of Mathematics, \textbf{151} (2006), pp. 323--355.



\bibitem[GRS03]{GRS03}
D. Ginzburg, S. Rallis and D. Soudry,
{\it On Fourier coefficients of automorphic forms of symplectic groups.}
Manuscripta Math. \textbf{111} (2003), no. 1, 1--16.


\bibitem[GRS11]{GRS11}
D. Ginzburg, S. Rallis and D. Soudry,
{\it The descent map from automorphic representations of {${\rm GL}(n)$} to classical groups.} World Scientific, Singapore, 2011. v+339 pp.



\bibitem[GGS17a]{GGS17a}
R. Gomez, D. Gourevitch and S. Sahi,
{\it Generalized and degenerate Whittaker models}.
Compositio Math. \textbf{153} (2017) 223–-256.

\bibitem[GGS17b]{GGS17b}
R. Gomez, D. Gourevitch and S. Sahi,
{\it Whittaker supports for representations of reductive groups}.
Preprint. 2017. arXiv: 1610.00284.




%


\bibitem[J84]{J84}
H. Jacquet, 
\textit{On the residual spectrum of ${\rm GL}(n)$}. Lie group representations, II (College Park, Md., 1982/1983), 185--208, Lecture Notes in Math., \textbf{1041}, Springer, Berlin, 1984.

\bibitem[JS76]{JS76}
H. Jacquet and J. Shalika,
{\it A non-vanishing theorem for zeta functions of $\GL_n$}. 
Invent. Math. \textbf{38} (1976/77), no. 1, 1--16.

\bibitem[JS81]{JS81}
H. Jacquet and J. Shalika,
{\it On Euler products and the classification of automorphic representations}. I. Amer. J. Math. \textbf{103} (1981), no. 3, 499--558.

\bibitem[J14]{J14}
D. Jiang,
{\it Automorphic Integral transforms for classical groups I: endoscopy correspondences}.
Automorphic Forms: L-functions and related geometry: assessing the legacy of I.I. Piatetski-Shapiro.
Comtemp. Math. \textbf{614}, 2014, AMS.

\bibitem[JL13]{JL13}
D. Jiang and B. Liu,
{\it On Fourier coefficients of automorphic forms of ${\rm GL}(n)$}.
Int. Math. Res. Not. 2013 (17): 4029--4071.

\bibitem[JL15]{JL15}
D. Jiang and B. Liu,
{\it On special unipotent orbits and Fourier coefficients for automorphic forms on symplectic groups}.
J. of Number Theory, \textbf{146} (2015), 343--389.

\bibitem[JL16a]{JL16a}
D. Jiang and B. Liu,
{\it Arthur parameters and Fourier coefficients for automorphic forms on symplectic groups.}
Ann. Inst. Fourier (Grenoble), \textbf{66} (2016), no. 2, 477--519.


\bibitem[JL16b]{JL16b}
D. Jiang and B. Liu,
{\it Fourier coefficients for automorphic forms on quasisplit classical groups.}
Advances in the Theory of Automorphic Forms and Their L-functions.
Contemporary Mathematics, Volume \textbf{664}, 2016, 187--208, AMS. 

\bibitem[JL17]{JL17}
D. Jiang and B. Liu,
{\it Fourier coefficients and cuspidal spectrum for symplectic groups.}
To appear in Geometric aspects of the trace formula, Simons Symposia. 2017. 


\bibitem[JLS16]{JLS16}
D. Jiang, B. Liu, and G. Savin,
{\it Raising nilpotent orbits in wave-front sets.}
Representation Theory \textbf{20} (2016), 419--450.

\bibitem[JLX18]{JLX18}
D. Jiang, B. Liu and B. Xu,
{\it A reciprocal problem of the Gan-Gross-Prasad conjecture for even orthogonal groups}, submitted (2018).





\bibitem[KMSW14]{KMSW14}
T. Kaletha, A. Minguez, S. W. Shin and P.-J. White,
{\it Endoscopic Classification of Representations: Inner Forms of Unitary Groups.}
arXiv:1409.3731 (2014).

\bibitem[L76]{L76}
R. Langlands,
{\it On the functional equations satisfied by Eisenstein series}. Springer Lecture Notes in Math. \textbf{544}. 1976.

\bibitem[L79a]{L79a}
R. Langlands,
\textit{On the notion of an automorphic representation. A supplement to the preceding paper}. Automorphic forms, representations and $L$-functions
(Proc. Sympos. Pure Math., Oregon State Univ., Corvallis, Ore., 1977), Part 1, pp. 203-207, Proc. Sympos. Pure Math., XXXIII,
Amer. Math. Soc., Providence, R.I., 1979.

\bibitem[L79b]{L79b}
R. Langlands,
{\it Automorphic representations, Shimura varieties, and motives.} Ein Marchen. Automorphic forms, representations and L-functions (Proc. Sympos. Pure Math., Oregon State Univ., Corvallis, Ore., 1977), Part 2, pp. 205--246, Proc. Sympos. Pure Math., XXXIII, Amer. Math. Soc., Providence, R.I., 1979.

%







\bibitem[MW87]{MW87}
C. M\oe glin and J.-L. Waldspurger,
{\it Mod\`eles de Whittaker d\'eg\'en\'er\'es pour des groupes $p$-adiques}. (French)
Math. Z. \textbf{196} (1987), no. 3, 427--452.

\bibitem[MW89]{MW89}
C. M\oe glin and J.-L. Waldspurger,
{\it  Le spectre residuel de {${\rm GL}(n)$.}} Ann. Sci. \'Ecole Norm. Sup. (4) \textbf{22} (1989), no. 4, 605--674.

\bibitem[MW95]{MW95}
C. M\oe glin and J.-L. Waldspurger,
{\it Spectral decomposition and Eisenstein series.} Cambridge Tracts in Mathematics, \textbf{113}. Cambridge University Press, Cambridge, 1995.

\bibitem[M15]{M15}
C. P. Mok, 
{\it Endoscopic classification of representations of quasi-split unitary groups.}
Mem. of AMS, \textbf{235} (2015), No. 1108.

%


\bibitem[PS79]{PS79}
I. Piatetski-Shapiro,
\textit{Multiplicity one theorems}. Automorphic forms, representations and $L$-functions
(Proc. Sympos. Pure Math., Oregon State Univ., Corvallis, Ore., 1977), Part 1, pp. 209--212, Proc. Sympos. Pure Math., XXXIII,
Amer. Math. Soc., Providence, R.I., 1979.

\bibitem[Sh80]{Sh80}
F. Shahidi,
{\it On nonvanishing of L-functions.}
Bull. Amer. Math. Soc. (N.S.) \textbf{2} (1980), no. 3, 462--464.

\bibitem[Sh81]{Sh81}
F. Shahidi,
{\it On certain L-functions.}
Amer. J. Math. \textbf{103} (1981), no. 2, 297--355.

\bibitem[Sh10]{Sh10}
F. Shahidi,
{\it Eisenstein series and automorphic L-functions,}
volume 58 of American Mathematical Society Colloquium Publications. American Mathematical Society, Providence, RI, 2010. ISBN 978-0-8218- 4989-7.

 



\bibitem[S74]{S74}
J. Shalika,
\textit{The multiplicity one theorem for ${\rm GL}_{n}$}. Ann. of Math. (2) \textbf{100} (1974), 171--193.


\bibitem[Ts17]{Ts17}
E. Tsiokos, 
{On Fourier Coefficients of $\mathrm{GL}(n)$-Automorphic Functions over Number Fields.}
Preprint 2017. arXiv: 1711.11545. 

\bibitem[V14]{V14}
S. Varma,
{\it On a result of Moeglin and Waldspurger in residual characteristic 2}, Math. Z. \textbf{277}, no. 3--4, 1027--1048 (2014).


\bibitem[W01]{W01}
J.-L. Waldspurger,
{\it Int\'egrales orbitales nilpotentes et endoscopie pour les groupes classiques non ramifi\'es}. Ast\'erisque \textbf{269}, 2001.

\bibitem[Xu14]{Xu14}
B. Xu, 
{\it Endoscopic classification of representations of $\mathrm{GSp}(2n)$ and $\mathrm{GSO}(2n)$.}
PhD Thesis, 2014, University of Toronto.


\end{thebibliography}
\end{document}